\renewcommand*\backref[1]{\ifx#1\relax \else \mbox{\textcolor{gray}{$\uparrow$ #1}} \fi}
\newtheorem{theorem}{Theorem}[section]
\newtheorem{lemma}[theorem]{Lemma}
\newtheorem{corollary}[theorem]{Corollary}
\numberwithin{equation}{section}
\newcommand\blfootnote[1]{%
  \begingroup
  \renewcommand\thefootnote{}\footnote{#1}%
  \addtocounter{footnote}{-1}%
  \endgroup
}
\newcommand{\N}{{\mathbb N}} 
\newcommand{\R}{{\mathbb R}}
\newcommand{\Rn}{{\mathbb R}^n}
\newcommand{\RN}{{\mathbb R}^{n+1}}
\newcommand{\s}{\mathbb{S}}
\newcommand{\sn}{\mathbb{S}^{n-1}}
\newcommand{\sN}{\mathbb{S}^{n}}
\newcommand{\SO}{{\rm SO}}
\newcommand{\KN}{{\mathcal{K}}^{n+1}}
\newcommand{\hm}{\mathcal H}
\newcommand{\fconvOmega}{{\mathrm{Conv}(\Omega)}} 
\newcommand{\Conv}{{\mathrm{Conv}}} 
\newcommand{\fconvcd}{{\mathrm{Conv}_{\mathrm{cd}}(\R^n)}} 
\newcommand{\proj}{\operatorname{proj}}
\newcommand{\gnom}{\operatorname{gno}} 
\newcommand{\vol}{\operatorname{vol}} 
\newcommand{\ospan}{\operatorname{span}} 
\newcommand{\MA}{\mathrm{MA}} 
\newcommand{\Hess}{{\operatorname{D}}^2}
\renewcommand{\d}{\,\mathrm{d}}
\newcommand{\bd}{\operatorname{bd}} 
\newcommand{\dom}{\operatorname{dom}} 
\title{Explicit solutions to Christoffel--Minkowski problems and Hessian equations under rotational symmetries}
\author{Fabian Mussnig and Jacopo Ulivelli}
\date{}
\begin{document}
\maketitle

\begin{abstract}
An explicit solution to the Christoffel--Minkowski problem for convex bodies of revolution is presented. The conditions on the prescribed measure involve only first moments over spherical caps, and the support function of the resulting convex body is given by an explicit representation formula in terms of the measure. More generally, existence problems for mixed area measures are addressed. The approach relies on constructing explicit convex solutions to mixed Monge-Amp\`ere equations on $\Rn$ under the assumption of radial symmetry, with the conditions on the measure being expressed through its values on open balls. As a special case, the Dirichlet problem for $k$-Hessian equations on $\Rn$ is treated.

\blfootnote{{\bf 2020 AMS subject classification:} 52A20 (52A41, 35A02, 35E10, 35J96).}
\blfootnote{{\bf Keywords:} Christoffel--Minkowski problem, area measure, Monge--Amp\`ere measure, Hessian equation, convex solution, explicit solution.}
\end{abstract}

\goodbreak

\normalsize
\noindent
\section{Introduction} 
One of the most natural and classical questions in convexity is the Minkowski problem, a central pillar of the field with close connections to numerous fundamental research directions. In its smooth version, it asks for necessary and sufficient conditions on a positive function on the $n$-dimensional unit sphere $\sN$ so that it arises as the inverse of the Gauss curvature (equivalently, the product of the principal radii of curvature) of an $n$-dimensional closed, convex hypersurface.

While the Minkowski problem was resolved in both its smooth and non-smooth versions almost a century ago (cf.\ \cite[Section 8]{SchneiderConvexBodiesBrunn2013}), the more general Christoffel--Minkowski problem, asking the analogous question for the other elementary symmetric functions of the principal radii of curvature, turns out to be much more challenging. For the mean radius of curvature, this is also known as the Christoffel problem, as it was solved in the three-dimensional case by Christoffel \cite{Christoffel}, with later extensions to higher dimensions by Firey \cite{Firey_Christoffel} and Berg \cite{Berg_Christoffel}. For the general problem, a breakthrough was obtained in the early 2000s in the pioneering work of Guan and Ma \cite{Guan_Ma_I}, together with its sequels \cite{Guan_Lin_Ma_II, Guan_Ma_Zhou_III}. There, sufficient conditions for the existence of a solution are provided when the given data is sufficiently regular. See also \cite{bryan_ivaki_scheuer_CM_flows,IvakiMilman,Li_Wan_Wang_21} for subsequent progress. Decades before this, Firey \cite{Firey_revolution} already treated the case when the given function on the sphere is invariant under rotations around a fixed axis. In a major recent development, Brauner, Hofst\"atter, and Ortega-Moreno \cite{BraunerHofstaetterOrtegaMoreno_mixedCM} removed the regularity requirements, thereby providing a complete solution to the Christoffel--Minkowski problem for bodies of revolution by employing methods from valuation theory and integral geometry.

\medskip

In this article, we present an independent solution to the Christoffel--Minkowski problem under the same symmetry assumptions used by Firey and without any further regularity requirements. Our approach yields new and very natural necessary and sufficient conditions on a given measure on $\sN$ for it to be the $j$th area measure of a convex body of revolution, which are formulated differently compared to those obtained in \cite{BraunerHofstaetterOrtegaMoreno_mixedCM}. Most notably, our method is constructive and provides an explicit description of the convex body in terms of the given measure. In particular, we do not resort to the classical solution of the Minkowski problem, and our method also directly shows the uniqueness of the solution.

While our proof is also informed by recent insights from valuation theory and integral geometry, it ultimately relies on finding a convex solution to an entire Monge--Amp\`ere-type equation on $\Rn$. As a byproduct of this strategy, we provide explicit convex solutions for a large class of Monge--Amp\`ere-type equations under the assumption of radial symmetry. 

\subsection{Christoffel--Minkowski Problem}
We denote by $\KN$ the convex bodies in $\RN$, meaning non-empty, compact, convex sets, and we will always assume that $n\geq 2$. To each convex body $K$ we associate its surface area measure $S_n(K,\cdot)$, which is a finite Borel measure on $\sN$. When $K$ has positive volume, this measure is given by
\[
S_n(K,\omega)=\hm^n\big(\{x\in \bd(K) : K \text{ has an outer unit normal in } \omega \text{ at } x \} \big)
\]
for every Borel set $\omega\subseteq\sN$, where $\hm^{n}$ denotes the $n$-dimensional Hausdorff measure and $\bd(K)$ is the boundary of $K$. By Minkowski's existence and uniqueness theorem \cite[Theorem 8.1.1 and Theorem 8.2.2]{SchneiderConvexBodiesBrunn2013}, a given Borel measure $\mu$ on $\sN$ is the surface area measure of a convex body $K\in\KN$ with positive volume, which is unique up to translations, if and only if $\mu$ is centered and not concentrated on any great subsphere.

Generalizing the surface area measure, we consider the $j$th area measures $S_j(K,\cdot)$ with $j\in\{0,\ldots,n\}$, which are defined through the relation
\[
S_n(K+B_r^{n+1},\cdot) = \sum_{j=0}^n \binom{n}{j} r^{n-j} S_j(K,\cdot)
\]
for $K\in\KN$, where $B_r^{n+1}$ is the closed Euclidean ball of radius $r\geq 0$ in $\RN$, and $K+L=\{x+y: x\in K, y\in L\}$ denotes the Minkowski sum of convex bodies. Generalizing the Minkowski problem, the Christoffel--Minkowski problem asks for necessary and sufficient conditions on a Borel measure on $\sN$ such that it appears as the $j$th area measure of a convex body. See, for example, \cite{BoeroeczkyFigalliRamos,Huang_YZ_survey}.

\medskip

Let $\{e_1,\ldots,e_{n+1}\}$ be the standard orthonormal basis of $\RN$ and let $\SO(n)\subset \SO(n+1)$ be the group of rotations $\vartheta$ such that $\vartheta e_{n+1}=e_{n+1}$. A measure $\mu$ on $\sN$ is called $\SO(n)$ invariant if $\mu(\vartheta B)=\mu(B)$ for every Borel set $B\subseteq \sN$ and every $\vartheta\in\SO(n)$. Furthermore, we say that $K\in\KN$ is a body of revolution if $\vartheta K=K$ for every $\vartheta\in\SO(n)$, and we note that the $j$th area measures of such bodies are $\SO(n)$ invariant. Our main result below resolves the Christoffel--Minkowski problem for $\SO(n)$ invariant measures/bodies of revolution. On top of that, we reconstruct the body $K$ from the given measure, using the fact that every convex body $K\in\KN$ is uniquely determined by its support function $h_K(x)=\max_{y\in\RN} \langle x,y\rangle$, $x\in\RN$, where $\langle \cdot,\cdot \rangle$ denotes the usual inner product.

Observe that the orthogonal projection of a body of revolution $K$ onto $e_{n+1}^\perp$ is a (possibly degenerate) ball, whose radius we denote by $R_K\geq 0$ throughout the following. Furthermore, we write $z=(z_1,\ldots,z_{n+1})$ for $z\in\sN$ and consider the three disjoint parts $\sN_-$, $\sN_o$, and $\sN_+$ of the unit sphere $\sN$, where
\[
\sN_{\pm}=\{z\in\sN : \pm z_{n+1} > 0\} \quad \text{and} \quad \sN_o=\{z\in\sN : z_{n+1} = 0\}.
\]
Moreover, for $\alpha\in (0,\frac{\pi}{2}]$ let
\[
C_{\alpha}^{\pm}=\{z\in\sN : \pm z_{n+1} > \cos(\alpha)\}
\]
denote the open spherical caps around $e_{n+1}$ and $-e_{n+1}$ respectively. In addition, we write $\kappa_n$ for the $n$-dimensional volume of $B_1^n$. Lastly, we say that a real-valued function is non-trivial if it is not constant zero.

\goodbreak

\begin{theorem}
\label{thm:main_CM_revol}
Let $j\in\{1,\ldots,n\}$ and let $\mu$ be a finite, centered, $\SO(n)$ invariant Borel measure on $\sN$. There exists a convex body of revolution $K\in\KN$ with $R_K>0$ such that
\[
S_j(K,\cdot)=\mu,
\]
if and only if the functions
\[
F_\mu^+(\alpha) = \frac{\int_{C_\alpha^+} |z_{n+1}|\d \mu(z)}{\sin(\alpha)^{n-j}}\quad \text{and}\quad F_\mu^-(\alpha) = \frac{\int_{C_\alpha^-} |z_{n+1}|\d \mu(z)}{\sin(\alpha)^{n-j}},\quad \alpha\in (0,\tfrac{\pi}{2}],
\]
are non-trivial and non-decreasing. The body $K$ is unique up to translations along $\ospan\{e_{n+1}\}$ and its support function, apart from the addition of $\langle \tau\, e_{n+1},z\rangle$ with some $\tau\in\R$, is given by
\[
h_K(z)=\begin{cases} \cos(\alpha_z) \int_0^{\alpha_z}\frac{1}{\cos(s)^2}\left(\tfrac{1}{\kappa_n} F_\mu^-(s) \right)^{\frac 1j} \d s,\qquad &\text{if } z\in \sN_-,\\
\left(\tfrac{1}{\kappa_n} F_\mu^-\left(\tfrac{\pi}{2}\right) \right)^{\frac 1j},\qquad &\text{if } z\in \sN_o,\\
\cos(\alpha_z)\left( \int_0^{\alpha_z}\frac{1}{\cos(s)^2}\left(\tfrac{1}{\kappa_n} F_\mu^+(s) \right)^{\frac 1j} \d s+c_\mu \right),\qquad &\text{if } z\in \sN_+,
\end{cases}
\]
where $\alpha_z\in [0,\tfrac{\pi}{2}]$ is such that $|z_{n+1}|=\cos(\alpha_z)$, and where
\begin{align*}
c_\mu &= \frac{\mu(\sN_o)}{j\kappa_n \left(\tfrac{1}{\kappa_n} F_\mu^-(\tfrac{\pi}{2})\right)^{\frac{j-1}{j}}} +\sup\nolimits_{\beta\in[0,\frac{\pi}{2})} \left(\tan(\beta) \left(\tfrac{1}{\kappa_n} F_\mu^-\left(\tfrac{\pi}{2}\right) \right)^{\frac 1j} - \int_0^\beta \frac{1}{\cos(s)^2}\left(\tfrac{1}{\kappa_n} F_\mu^-(s) \right)^{\frac 1j} \d s\right)\\
&\quad + \sup\nolimits_{\gamma\in[0,\frac{\pi}{2})} \left(\tan(\gamma) \left(\tfrac{1}{\kappa_n} F_\mu^+\left(\tfrac{\pi}{2}\right) \right)^{\frac 1j} - \int_0^\gamma \frac{1}{\cos(s)^2}\left(\tfrac{1}{\kappa_n} F_\mu^+(s) \right)^{\frac 1j} \d s\right).
\end{align*}
\end{theorem}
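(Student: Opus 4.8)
\emph{Strategy.} The plan is to reduce the Christoffel--Minkowski problem for a body of revolution to a pair of radially symmetric mixed Monge--Amp\`ere problems on $\Rn$, one attached to each open hemisphere of $\sN$, to solve these explicitly, and then to glue the two solutions along $\sN_o$ into one support function. Throughout, $h_K$ is viewed interchangeably as a function on $\sN$ and as its $1$-homogeneous convex extension to $\RN$.

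\emph{The correspondence and necessity.} Given a body of revolution $K$ with $R_K>0$, I would project $\sN_{\pm}$ centrally onto the hyperplanes $\{z_{n+1}=\pm1\}\cong\Rn$ and put $f_{\pm}(y)=h_K((y,\pm1))$. These are finite convex functions, radial by $\SO(n)$ invariance, with $f_{\pm}(0)=h_K(\pm e_{n+1})$ and $f_{\pm}'(0)=0$; if $z\in\sN_{\pm}$ has $|z_{n+1}|=\cos\alpha_z$, then its projection $y$ satisfies $|y|=\tan\alpha_z$ and $h_K(z)=\cos(\alpha_z)f_{\pm}(y)$. Let $q(y)=\sqrt{1+|y|^2}$ be the function attached in this way to $B_1^{n+1}$, and let $\nu_{\pm}$ be the pushforward of $S_j(K,\cdot)|_{\sN_{\pm}}$ under this projection, weighted by $|z_{n+1}|$. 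The computational heart of the argument -- which I would establish by diagonalising $\Hess f$ and $\Hess q$ simultaneously in polar coordinates and telescoping -- is the identity
\[
\MA\big(\underbrace{f,\dots,f}_{j},\underbrace{q,\dots,q}_{n-j}\big)(B_\rho^n)=\kappa_n\,f'(\rho)^{j}\,q'(\rho)^{n-j}
\]
for radial convex $f$, together with the fact that $\nu_{\pm}$ equals this mixed Monge--Amp\`ere measure when $f=f_{\pm}$; the weight $|z_{n+1}|$ is precisely the Jacobian needed to pass from the $j$th area measure on $\sN$ to the mixed Monge--Amp\`ere measure on $\Rn$. Since $q'(\rho)=\sin(\arctan\rho)$ and $\nu_{\pm}(B_{\tan\alpha}^n)=\int_{C_\alpha^{\pm}}|z_{n+1}|\d\mu$, evaluating the identity on balls gives $f_{\pm}'(\tan\alpha)=(\tfrac1{\kappa_n}F_\mu^{\pm}(\alpha))^{1/j}$; integrating in $\rho$ and substituting $s=\arctan\rho$, so that $\tfrac{\d s}{\cos^2 s}=\d\rho$, reproduces the stated formulas for $h_K$ on $\sN_{-}$ and $\sN_o$, and on $\sN_{+}$ up to the additive constant $f_{+}(0)$. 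Conversely, a radial convex $f_{\pm}$ realising a prescribed $\nu_{\pm}$ exists exactly when $f_{\pm}'$ is non-decreasing and not identically zero, that is, when $F_\mu^{\pm}$ is non-decreasing and non-trivial; non-triviality corresponds to $R_K=(\tfrac1{\kappa_n}F_\mu^{\pm}(\tfrac\pi2))^{1/j}>0$, and centeredness of $\mu$ forces $F_\mu^{-}(\tfrac\pi2)=F_\mu^{+}(\tfrac\pi2)$, so that the value on $\sN_o$ is unambiguous. This settles necessity and produces the two hemispherical halves of a candidate body whenever the conditions hold.

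\emph{Gluing along the equator and uniqueness.} After normalising $h_K(-e_{n+1})=0$ the only remaining freedom is the constant $f_{+}(0)=h_K(e_{n+1})$, which must be chosen so that the $1$-homogeneous extensions built from $f_{-}$ on $\{z_{n+1}\le0\}$ and from $f_{+}$ on $\{z_{n+1}\ge0\}$ patch together into a globally convex function on $\RN$. By the usual gluing criterion for convex functions this reduces to a one-sided comparison of the $e_{n+1}$-directional derivatives of the two pieces along the vertical lines meeting $\sN_o$. Differentiating $h_K(Ru+te_{n+1})$ for $R>0$ and $u\in\sN_o$ and letting $t\to0^{-}$ resp.\ $t\to0^{+}$, a short computation shows that the two limiting slopes are
\[
\sup\nolimits_{\beta\in[0,\frac\pi2)}\Big(\tan\beta\,\big(\tfrac1{\kappa_n}F_\mu^{-}(\tfrac\pi2)\big)^{1/j}-\int_0^\beta\tfrac{1}{\cos^2 s}\big(\tfrac1{\kappa_n}F_\mu^{-}(s)\big)^{1/j}\d s\Big)
\]
and $f_{+}(0)$ minus the analogous supremum formed with $F_\mu^{+}$. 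Convexity thus forces $f_{+}(0)$ to be at least the sum of the two suprema, and any larger choice creates a flat cylindrical belt in $\bd K$ of radius $R_K$ and height $H$ equal to this excess; a Steiner-formula computation then shows that such a belt carries exactly $j\kappa_n R_K^{j-1}H$ of the $j$th area measure on $\sN_o$. Setting this equal to the prescribed mass $\mu(\sN_o)$ determines $H$, hence $f_{+}(0)=c_\mu$, and gives $S_j(K,\cdot)=\mu$ on all of $\sN$. The body $K$ is unique up to translations along $\ospan\{e_{n+1}\}$ because the only undetermined data were the normalisation $h_K(-e_{n+1})=0$, a change of which adds $\langle\tau e_{n+1},\cdot\rangle$ to the support function, and the value of $f_{+}(0)$, which is now pinned down.

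\emph{Main obstacle.} The hard part is to carry all of this out without any regularity assumption on $\mu$: the measures $\nu_{\pm}$ need not have densities, $F_\mu^{\pm}$ need only be monotone, $\mu$ may charge $\sN_o$, and the candidate body may be lower-dimensional on one side. The identity relating $\nu_{\pm}$ to a mixed Monge--Amp\`ere measure, the belt accounting on $\sN_o$, and -- most delicately -- the verification that the stated piecewise formula really is the support function of a convex body must all be established at this level of generality. This is where the explicit resolution of the radial mixed Monge--Amp\`ere equation (developed beforehand) does the real work, supported by insights from integral geometry and valuation theory and, for instance, by approximation with smooth rotationally symmetric bodies together with weak continuity of area measures; once these are in place, global convexity of the assembled $1$-homogeneous function comes down to the monotonicity of $F_\mu^{\pm}$ and the gluing inequality built into the definition of $c_\mu$.
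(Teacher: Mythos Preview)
Your proposal follows essentially the same route as the paper: gnomonic projection of the two open hemispheres, reduction to entire radial mixed Monge--Amp\`ere problems via the identity $\MA(f[j],u_B[n-j];B_\rho^n)=\kappa_n\,f'(\rho)^j\,p_{u_B}(\rho)^{n-j}$ (the paper's Lemma~3.1 and Lemma~4.1), explicit integration of $f_\pm'$ to recover $h_K$, and gluing along $\sN_o$ with a cylindrical belt of height $\mu(\sN_o)/(j\kappa_n R_K^{j-1})$ accounting for the equatorial mass (the paper's Lemma~4.3).

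The one place where your sketch is genuinely thinner than the paper is the finiteness of the two suprema in $c_\mu$, equivalently the fact that the candidate body is bounded in the $e_{n+1}$-direction. You correctly identify this as the hard part and suggest approximation by smooth bodies together with weak continuity of area measures. The difficulty with that route is that extracting a convergent subsequence of the approximating bodies $K_k$ already requires a uniform bound on $c_{\mu_k}$, and the formula for $c_\mu$ does not obviously give one without knowing in advance that the suprema are finite and depend continuously on $\mu$; so the argument risks being circular. The paper avoids this by a direct estimate: it rewrites each supremum as $(u^\pm)^*(R_\mu e_1)$ and shows that the graph of $(u^\pm)^*$ over its domain has finite $n$-dimensional area, using the finiteness of $\mu(\sN_\pm)=\int_{\Rn}\sqrt{1+|x|^2}\d\eta^\pm(x)$, a comparison between $\MA(u[j],u_B[n-j];\cdot)$ and $\MA_j(u;\cdot)$ away from the origin (Lemma~4.2), and a Kubota-type formula (Theorem~2.4) together with \eqref{eq:conjugate_MA} to pass to the conjugate. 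This is the step where the ``insights from integral geometry and valuation theory'' you allude to are actually deployed, and it is worth making explicit.
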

Let us give some intuition for the conditions on $\mu$ in Theorem~\ref{thm:main_CM_revol}. The finiteness of $\mu$ ensures that the solution is bounded, and the monotonicity of $F_\mu^\pm$ allows us to find convex functions that describe the lower and upper boundaries of $K$ (cf.\ Section~\ref{se:mm_sc}). Because $\mu$ is centered, we can essentially glue these parts together, possibly with a right cylinder of the form $B_{R}^n+[o,\ell\,e_{n+1}]$ in between. Finally, we have to exclude the case that the functions $F_\mu^\pm$ are trivial, which can only occur if $K$ is a line segment in $\ospan\{e_{n+1}\}$, and in which case the $j$th area measure vanishes identically. Note that our result permits the case when $K$ is a disk, meaning $K=B_R^{n+1}\cap e_{n+1}^\perp$ for some $R>0$. Lastly, we mention that $c_\mu$ is the length of the orthogonal projection of $K$ onto $\ospan\{e_{n+1}\}$.

\medskip

Our proof is based on explicit solutions to entire Monge--Amp\`ere-type equations in $\Rn$, meaning that the domain of the problem is the whole space (see Theorem~\ref{thm:mixed_hessian_unbounded}). Since we establish these for more general mixed Monge--Amp\`ere measures, it is straightforward to generalize our main result, Theorem~\ref{thm:main_CM_revol}, to a large class of existence and uniqueness problems for mixed area measures. We will explain this, together with examples and further comments, in Section~\ref{se:discussion}. Let us point out that necessary and sufficient conditions for such mixed Christoffel--Minkowski problems for bodies of revolution were recently obtained in \cite[Theorem 6.11]{BraunerHofstaetterOrtegaMoreno_mixedCM}, relying on mixed spherical projections \cite{BraunerHofstaetterOrtegaMoreno_Lefscheftz}.

\subsection{Hessian Equations}
The Christoffel--Minkowski problem corresponds to solving a $k$-Hessian equation on the sphere. However, as a by-product of our approach to Theorem~\ref{thm:main_CM_revol}, we obtain an explicit convex solution to the Dirichlet problem for $k$-Hessian equations on $\Rn$, assuming radial symmetry. Consider the open Euclidean ball $D^n_R$ of radius $R>0$ in $\Rn$ and, for simplicity, a positive radially symmetric function $f \in C(D^n_R)$ (later, $f(x)\d x$ will be replaced by a rotationally invariant Borel measure). For $k \in \{1,\dots,n\}$, the problem at hand consists in finding necessary and sufficient conditions on $f$ such that the $k$-Hessian equation 
\begin{equation}\label{eq:smooth_k-Hess}
    \begin{cases}
        [\Hess u(x)]_k=f(x) & \text{ in }D^n_R,\\
        u\equiv 0 &\text{ on }\bd (D^n_R),
    \end{cases}
\end{equation}
admits a convex solution $u$. Here, $[\Hess u(x)]_k$ is the $k$th elementary symmetric function of the eigenvalues of the Hessian matrix $\Hess u(x)$, and we assume that $u$ continuously extends to $\bd(D_R^n)$, the boundary of $D_R^n$.

In their general form, $k$-Hessian equations have been the focus of intense study. See, for example, the work of Caffarelli, Nirenberg, and Spruck \cite{Caff_Nir_Spruck} and the successive investigations by Trudinger \cite{Trudinger_regular_case, Trudinger_weak_solutions} and Trudinger and Wang \cite{Hessian_measures_I, Hessian_measures_II, Hessian_measures_III}. In the latter series of works, \eqref{eq:smooth_k-Hess} was reformulated in a weak sense through the notion of Hessian measures $\Phi_k(u;\cdot)$, $k \in \{ 1,\dots,n\}$, which are non-smooth generalizations of the elementary symmetric functions (see Section~\ref{se:mixed_MA_measures}). In particular, if $u \in C^2(D_R^n)$, then \[\d \Phi_k(u;x)=[\Hess u(x)]_k \d x. \] To complete this picture, the characterization of convex solutions for $k \in \{1,\dots,n-1\}$ is still an open problem which has been treated in a handful of cases only. See, for example, \cite{Ma_Xu_R3, Liu_Ma_Xu_R3} and the works of Salani \cite{Salani_large_solutions, Salani_R3}. The symmetry assumptions we impose below lead to a sensible simplification of the problem, which allows us not only to formulate very simple necessary and sufficient conditions for the existence of convex solutions, but also an explicit formula for the latter.

\medskip

Let $|x|=\sqrt{\langle x,x\rangle }$ denote the Euclidean norm of $x\in\Rn$. Under the assumption of rotational symmetry (meaning that $\mu(\vartheta B)=\mu(B)$ for every $\vartheta\in\SO(n)$ and $B\subseteq D_R^n$ Borel), we provide explicit convex solutions to $k$-Hessian equations when the domain is an open Euclidean ball. We require no further regularity. 

\goodbreak

\begin{theorem}
\label{thm:Hess_intro}
  Let $\mu$ be a finite and rotationally invariant Borel measure on $D_R^n$ for some $R>0$. For $k \in \{1,\ldots,n\}$, there exists a unique radially symmetric convex function $u\colon D^n_R \to \R$, solving the problem
    \begin{equation}
    \label{eq:dirichlet_problem_hessian}
        \begin{cases}
            \Phi_k(u;\cdot)=\mu &\text{ in } D_R^n, \\
            u\equiv 0  &\text{ on } \bd(D_R^n),
        \end{cases}
    \end{equation}
    if and only if
    \[
    r\mapsto \frac{\mu(D_r^n)}{r^{n-k}},\quad r\in(0,R],
    \]
    is non-decreasing. In this case, $u$ is given by
    \begin{equation}
    u(x) = -\int_{|x|}^R \left(\frac{\mu(D_r^n)}{\binom{n}{k}\kappa_n r^{n-k}}\right)^{\frac 1k} \d r
    \end{equation}
    for $x\in D_R^n$.
\end{theorem}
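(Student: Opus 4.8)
The plan is to use the rotational symmetry to reduce \eqref{eq:dirichlet_problem_hessian} to a one-dimensional problem, and then to exploit a divergence identity for the $k$-Hessian of a radial function. Write $u(x)=\phi(|x|)$ with $\phi\colon[0,R]\to\R$. Then $u$ is convex on $D_R^n$ precisely when $\phi$ is convex and non-decreasing (which forces $\phi'(0^+)\in[0,\infty)$), the boundary condition becomes $\phi(R)=0$, and monotonicity then gives $\phi\le 0$. For $\phi\in C^2$ with $\phi'\ge 0$, the Hessian $\Hess u(x)$ has the radial eigenvalue $\phi''(r)$ and the tangential eigenvalue $\phi'(r)/r$ with multiplicity $n-1$, where $r=|x|$, whence
\[
[\Hess u(x)]_k=\binom{n-1}{k}\Bigl(\tfrac{\phi'(r)}{r}\Bigr)^{k}+\binom{n-1}{k-1}\phi''(r)\Bigl(\tfrac{\phi'(r)}{r}\Bigr)^{k-1}.
\]
Using $\binom{n-1}{k}=\tfrac{n-k}{n}\binom{n}{k}$ and $k\binom{n-1}{k}=(n-k)\binom{n-1}{k-1}$, a short computation yields the identity
\[
n\kappa_n\,r^{n-1}[\Hess u(x)]_k=\binom{n}{k}\kappa_n\,\frac{\d}{\d r}\bigl(r^{n-k}\phi'(r)^k\bigr),
\]
and integrating over $D_r^n$ in polar coordinates (the boundary contribution at the origin vanishes because $\phi'(0)=0$ for a smooth radial function) gives
\[
\Phi_k(u;D_r^n)=\binom{n}{k}\kappa_n\,r^{n-k}\phi'(r)^k,\qquad r\in(0,R].
\]

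Assume for the moment that this identity persists for an arbitrary radial convex $u$. If $u$ solves \eqref{eq:dirichlet_problem_hessian}, then at every point of differentiability of $\phi$---hence on a dense subset of $(0,R]$---one has $\phi'(r)=\bigl(\mu(D_r^n)/(\binom{n}{k}\kappa_n r^{n-k})\bigr)^{1/k}$; since $\phi$ is convex, $\phi'$ is non-decreasing there, and since $r\mapsto\mu(D_r^n)/r^{n-k}$ is left-continuous and $t\mapsto t^{1/k}$ is increasing, this forces $r\mapsto\mu(D_r^n)/r^{n-k}$ to be non-decreasing on all of $(0,R]$. Moreover $\phi$ is then forced to equal $-\int_r^R\phi'$, which gives uniqueness and the stated formula. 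Conversely, if $r\mapsto\mu(D_r^n)/r^{n-k}$ is non-decreasing, set $g(r)=\bigl(\mu(D_r^n)/(\binom{n}{k}\kappa_n r^{n-k})\bigr)^{1/k}$; this is non-negative, non-decreasing, and bounded above by $g(R)<\infty$ since $\mu$ is finite, so $\phi(r):=-\int_r^R g$ is well defined, convex, non-decreasing, and vanishes at $R$. Hence $u(x):=\phi(|x|)$ is a radial convex function that extends continuously to $\bd(D_R^n)$ with boundary value $0$, and the identity above gives $\Phi_k(u;D_r^n)=\mu(D_r^n)$ for every $r\in(0,R]$. Finally, $\Phi_k(u;\cdot)$ is rotationally invariant because $u$ is radial, and a rotationally invariant Borel measure on $D_R^n$ is determined by the values it assigns to the concentric balls $D_r^n$; therefore $\Phi_k(u;\cdot)=\mu$.

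The one genuinely technical point is the identity $\Phi_k(u;D_r^n)=\binom{n}{k}\kappa_n r^{n-k}\phi'(r)^k$ for non-smooth radial convex $u$. I would obtain it by mollifying $u$ with a radial kernel to get smooth convex radial $u_\varepsilon\to u$ locally uniformly (with $\phi_\varepsilon'(0)=0$), applying the $C^2$ identity to $u_\varepsilon$, and letting $\varepsilon\to0$: here one invokes the weak continuity of Hessian measures under locally uniform convergence of convex functions, restricts to the (at most countably many exceptional) radii $r$ with $\Phi_k(u;\bd(D_r^n))=0$, uses $\phi_\varepsilon'(r)\to\phi'(r)$ at continuity points of $\phi'$, and then extends the equality to every $r$ by left-continuity of both sides. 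One subtlety to keep in mind is that for $k=n$ the limit of $r^{n-k}\phi'(r)^k$ as $r\to 0^+$ need not vanish when $u$ has a conical point at the origin, but the corresponding term is then exactly the atom of $\Phi_n(u;\cdot)$ at $0$, so the identity is unaffected. If the preliminary sections already record a formula for the Hessian measure of a radial convex function, or if Theorem~\ref{thm:mixed_hessian_unbounded} is applied with $n-k$ of its arguments taken equal to $\tfrac{1}{2}|x|^2$---so that the relevant mixed Monge--Amp\`ere measure is a constant multiple of $\Phi_k(u;\cdot)$---this step is immediate and the remainder of the argument is unchanged.
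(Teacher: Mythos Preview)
Your proposal is correct and follows the same overall strategy as the paper: reduce to the profile function $\phi$, establish the identity $\Phi_k(u;D_r^n)=\binom{n}{k}\kappa_n\, r^{n-k}\phi'_-(r)^k$, use it to characterize existence via monotonicity of $r\mapsto\mu(D_r^n)/r^{n-k}$, and integrate to obtain the unique solution. The one difference is how the key identity is obtained. You compute it for smooth radial functions via the eigenvalue decomposition of the Hessian and a divergence formula, then extend by mollification and weak continuity of Hessian measures. The paper instead proves it directly for \emph{all} radially symmetric convex functions (Lemma~\ref{le:ma_d_r_n} and \eqref{eq:Hess_radial}) by working from the very definition of the Monge--Amp\`ere measure: for radial $u_1,\dots,u_n$ the set $\bigcup_{x\in D_r^n}\partial(\sum\lambda_i u_i)(x)$ is simply a ball of radius $\sum\lambda_i p_{u_i}(r)$, so its volume expands as a polynomial in the $\lambda_i$ and comparison of coefficients gives $\MA(u_1,\dots,u_n;D_r^n)=\kappa_n\prod p_{u_i}(r)$ with no regularity needed. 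This sidesteps the approximation step and the attendant subtleties (boundary atoms, continuity points of $\phi'$) that you correctly flag. Your closing anticipation is exactly right: the paper records this formula beforehand and then obtains Theorem~\ref{thm:Hess_intro} as the special case $u_{k+1}=\cdots=u_n=q$ of the general mixed result---though the relevant statement is Theorem~\ref{thm:mixed_hessian_bounded} (bounded domain) rather than Theorem~\ref{thm:mixed_hessian_unbounded}.
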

\goodbreak
\noindent
We point out that our approach also leads to the solution of more general problems for mixed Monge--Amp\`ere measures (see Theorem~\ref{thm:mixed_hessian_bounded} below) from which Theorem~\ref{thm:Hess_intro} stems as a simple corollary.

\medskip

The tools developed to prove Theorem~\ref{thm:Hess_intro} adapt as well to solve mixed Monge--Amp\`ere equations of entire type (Theorem~\ref{thm:mixed_hessian_unbounded}), which is the real intermediate step in our approach toward the Christoffel--Minkowski problem.
For some classic results on the topic, see, for example, Bakelman's book \cite{Bakelman}. See also \cite{Wang_Minkowski} for a full solution of the smooth and strictly convex case and \cite{Ulivelli_Entire_MA} for recent developments concerning the connection between Minkowski problems for convex bodies and entire Monge--Amp\`ere equations. 

\section{Preliminaries}
We will mostly work in the $n$-dimensional and $(n+1)$-dimensional Euclidean spaces $\Rn$ and $\RN$, always assuming that $n\geq 2$, and writing $o$ for the origin. For $k\in\{1,\ldots,n-1\}$ we denote $\R^k=\ospan\{e_1,\ldots,e_k\}\subset \Rn$.
In addition, we write $\vol_k$ for the $k$-dimensional volume in $\R^k$. Lastly, for convenience we will consider $D_R^n$ with $R\in(0,\infty]$, where $D_\infty^n=\Rn$. In this context, for $0<r\leq R$, the case $r=R$ will only be allowed when $R$ is finite. Standard references for convex bodies and convex functions are the books by Schneider \cite{SchneiderConvexBodiesBrunn2013} and Rockafellar \cite{RockafellarConvex1997}, respectively. In addition, we refer to Figalli's book \cite{FigalliMongeAmpereequation2017} for a comprehensive treatment of Monge--Amp\`ere equations.

\subsection{Convex Functions}
For a non-empty, open, convex $\Omega\subseteq\Rn$ let
\[
\fconvOmega=\{u\colon \Omega\to\R : u \text{ is convex}\}
\]
denote the set of finite-valued convex functions on $\Omega$. Given $u\in\fconvOmega$, we consider the subdifferential of $u$ at $x\in\Omega$,
\[
\partial u(x)=\{p\in\Rn : u(z)\geq u(x)+\langle p,z-x\rangle \text{ for all } z\in\Omega\},
\]
which is a closed, convex set and generalizes the usual gradient map. We need the following standard result, which can be found, for example, in \cite[Theorem 23.8]{RockafellarConvex1997}, and where we use Minkowski addition of sets.

\begin{lemma}
\label{le:subdiff_sum}
If $u_1,u_2\in\fconvOmega$, then
\[
\partial(u_1+u_2)(x)=\partial u_1(x)+\partial u_2(x)
\]
for every $x\in\Omega$.
\end{lemma}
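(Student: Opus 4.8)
The inclusion $\partial u_1(x) + \partial u_2(x) \subseteq \partial(u_1+u_2)(x)$ is the elementary one and I would dispatch it first: if $p_i\in\partial u_i(x)$ for $i=1,2$, then adding the two subgradient inequalities $u_i(z)\geq u_i(x)+\langle p_i,z-x\rangle$, valid for all $z\in\Omega$, gives $(u_1+u_2)(z)\geq (u_1+u_2)(x)+\langle p_1+p_2,z-x\rangle$, i.e.\ $p_1+p_2\in\partial(u_1+u_2)(x)$. So the entire content lies in the reverse inclusion, and here I would run a separation argument. After replacing $u_1$ by $u_1-\langle p,\cdot\rangle$ and translating, it suffices to treat the case $x=o$, $u_1(o)=u_2(o)=0$, and $o\in\partial(u_1+u_2)(o)$, which just means $u_1(z)+u_2(z)\geq 0$ for all $z\in\Omega$. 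The goal becomes: find $q\in\Rn$ with $u_1(z)\geq\langle q,z\rangle$ and $u_2(z)\geq-\langle q,z\rangle$ for all $z\in\Omega$, since then $q\in\partial u_1(o)$, $-q\in\partial u_2(o)$, and their sum is $o$.

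To produce such a $q$, I would separate, inside $\Rn\times\R$, the epigraph $E_1=\{(z,t):z\in\Omega,\ t\geq u_1(z)\}$ from the set $E_2=\{(z,t):z\in\Omega,\ t\leq -u_2(z)\}$. Both are convex and, because finite-valued convex functions are continuous on open sets, have nonempty interior; their interiors are disjoint precisely by the inequality $u_1+u_2\geq 0$. The separating hyperplane theorem then yields a nonzero $(a,b)\in\Rn\times\R$ and $c\in\R$ with $\langle a,z\rangle+bt\geq c$ on $E_1$ and $\langle a,z\rangle+bt\leq c$ on $E_2$. Letting $t\to+\infty$ along a fixed vertical line in $E_1$ forces $b\geq 0$. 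Evaluating the two inequalities at the boundary points $t=u_1(z)$ and $t=-u_2(z)$ and then specializing to $z=o$ (where $t=0$ lies on both boundary pieces, by the normalization $u_i(o)=0$) pins down $c=0$ and, after dividing by $b$, produces $q=-a/b$ with the desired sandwich $u_1(z)\geq\langle q,z\rangle$ and $u_2(z)\geq-\langle q,z\rangle$.

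The one subtlety, and the step I would be most careful about, is ruling out $b=0$: a vertical separating hyperplane $\langle a,z\rangle=c$ would have to separate $\Omega$ from itself, since both $E_1$ and $E_2$ project onto the open convex set $\Omega$, which is impossible for the nonzero functional $a$. Once $b>0$ is secured, the rest is bookkeeping. In fact, this lemma is precisely \cite[Theorem 23.8]{RockafellarConvex1997}, so in the writeup I would state the easy inclusion explicitly and simply quote the reference for the reverse one rather than reproduce the separation argument in full.
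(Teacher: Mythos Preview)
Your argument is correct. The paper does not actually prove this lemma at all: it merely records it as a standard result and points to \cite[Theorem 23.8]{RockafellarConvex1997}. Your write-up therefore goes well beyond what the paper supplies, giving the easy inclusion explicitly and then sketching the separation argument behind the reverse inclusion, which is essentially Rockafellar's own proof. Since you already observe at the end that the statement is precisely \cite[Theorem 23.8]{RockafellarConvex1997}, the most economical choice---and the one the paper makes---is to cite that reference and omit the separation details; your sketch would be appropriate only if the paper aimed to be self-contained at this level.
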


Next, for a convex function $u\colon\Rn\to(-\infty,\infty]$ let
\begin{equation}
\label{eq:def_conjugate}
u^*(x)=\sup\nolimits_{y\in\Rn} \big(\langle x,y\rangle - u(y)\big)
\end{equation}
be the convex conjugate or Legendre--Fenchel transform of $u$, which is a lower semicontinuous, convex function on $\Rn$. In particular, the convex conjugates of the elements in $\Conv(\Rn)$ are lower semicontinuous, convex functions $w\colon\Rn\to(-\infty,\infty]$ that are proper, meaning $w(x)<\infty$ for some $x\in\Rn$, and super-coercive, that is, $\lim\nolimits_{|x|\to\infty}\frac{w(x)}{|x|}=\infty$. Among these functions, we are particularly interested in those that have compact domain
\[
\dom(w)=\{x\in\Rn : w(x)<\infty\}.
\]
We denote the set of such functions by $\fconvcd$ and remark that $w\in\fconvcd$ implies $w^*\in \Conv(\Rn)$ but $u\in\Conv(\Rn)$ does not necessarily imply that $u^*\in\fconvcd$.
More generally, we have the following relation between subdifferentials and convex conjugates. See, for example, \cite[Theorem 23.5]{RockafellarConvex1997}.

\begin{lemma}
\label{le:subdiff_conjugate}
Let $u\colon \Rn\to(-\infty,\infty]$ be a proper, lower semicontinuous, convex function. For any $x,y\in\Rn$ we have $y\in\partial u(x)$ if and only if $x\in\partial u^*(y)$.
\end{lemma}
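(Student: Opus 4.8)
The plan is to pass through the characterization of subgradients by equality in the Fenchel--Young inequality and then exploit the symmetry of that characterization under conjugation. The starting point is the Fenchel--Young inequality: directly from the definition \eqref{eq:def_conjugate} of the conjugate, $u^*(y)\geq \langle x,y\rangle - u(x)$ for all $x,y\in\Rn$, hence $u(x)+u^*(y)\geq\langle x,y\rangle$ whenever $x\in\dom(u)$.

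The key step is to establish, for every $x\in\dom(u)$, the equivalence
\[
y\in\partial u(x)\quad\Longleftrightarrow\quad u(x)+u^*(y)=\langle x,y\rangle.
\]
For the forward direction, $y\in\partial u(x)$ says $u(z)\geq u(x)+\langle y,z-x\rangle$ for all $z\in\Rn$; rewriting this as $\langle y,z\rangle-u(z)\leq\langle y,x\rangle-u(x)$ and taking the supremum over $z$ yields $u^*(y)\leq\langle x,y\rangle-u(x)$, which together with Fenchel--Young forces equality. Conversely, if $u(x)+u^*(y)=\langle x,y\rangle$, then for every $z$ we have $\langle y,z\rangle-u(z)\leq u^*(y)=\langle x,y\rangle-u(x)$, i.e.\ $u(z)\geq u(x)+\langle y,z-x\rangle$, so $y\in\partial u(x)$.

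To conclude, I would invoke the Fenchel--Moreau biconjugation theorem: since $u$ is proper, lower semicontinuous and convex, $u^{**}=u$, and $u^*$ is again proper, lower semicontinuous and convex, so the equivalence above applies verbatim with $u$ replaced by $u^*$. Thus $x\in\partial u^*(y)$ is equivalent to $u^*(y)+u^{**}(x)=\langle x,y\rangle$, i.e.\ to $u^*(y)+u(x)=\langle x,y\rangle$, which is exactly the condition characterizing $y\in\partial u(x)$. When $u(x)=+\infty$ both sides fail simultaneously: properness gives $\partial u(x)=\emptyset$, and $x\notin\partial u^*(y)$ since $u^{**}(x)=+\infty$ precludes the equality. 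The only substantive ingredient beyond unwinding definitions is the identity $u^{**}=u$, which is precisely where the hypotheses of properness and lower semicontinuity are genuinely used; this is the point to be careful about, though it is entirely classical (see \cite{RockafellarConvex1997}).
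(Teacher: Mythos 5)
Your proof is correct and self-contained. The paper itself does not prove this lemma—it simply cites \cite[Theorem 23.5]{RockafellarConvex1997}—so the relevant comparison is with the standard treatment, and your argument is exactly that: the equality case of the Fenchel--Young inequality characterizes subgradients, and the Fenchel--Moreau identity $u^{**}=u$ (which is where properness and lower semicontinuity enter) makes that characterization symmetric in $(u,u^*)$. One small point worth tightening: you dispatch the case $u(x)=+\infty$ explicitly, but the symmetric degenerate case $u^*(y)=+\infty$ (with $x\in\dom(u)$) is only implicit. It is handled the same way—properness of $u^*$ forces $\partial u^*(y)=\emptyset$, and your equivalence for $u$ at $x\in\dom(u)$ shows $y\notin\partial u(x)$ since the equality $u(x)+u^*(y)=\langle x,y\rangle$ cannot hold—but stating it would make the case analysis airtight.
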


We will also need the following result on the composition of convex conjugation with restriction of functions to subspaces, which can be found, for example, in \cite[Theorem 16.3]{RockafellarConvex1997}. We will only need to consider this for functions defined on the whole domain, whose conjugates are, as noted above, super-coercive. For such a lower semicontinuous, super-coercive, convex function $w$ we denote by $\proj_{\R^j} w\colon \R^j\to (-\infty,\infty]$ its projection onto $\R^j$ with $j\in\{1,\ldots,n-1\}$, which is given by
\[
(\proj_{\R^j} w)(x) = \min\nolimits_{z\in (\R^j)^\perp} w(x+z)
\]
for $x\in\R^j$. Alternatively, $\proj_{\R^j} w$ is obtained by projecting the level sets of $w$ onto $\R^j$.

\begin{lemma}
\label{le:conjugate_projection}
Let $j\in\{1,\ldots,n-1\}$. If $u\in\Conv(\Rn)$, then
\[
\big(u\vert_{\R^j}\big)^* = \proj_{\R^j}(u^*),
\]
where on the left side, convex conjugation is understood with respect to the ambient space $\R^j$.
\end{lemma}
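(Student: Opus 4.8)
The plan is to exploit the general principle that restriction to a subspace and orthogonal projection onto it are dual operations under Legendre--Fenchel conjugation. Writing $\iota\colon\R^j\hookrightarrow\Rn$ for the inclusion, one has $u\vert_{\R^j}=u\circ\iota$, while the adjoint $\iota^{\mathsf T}$ is exactly the orthogonal projection $\proj_{\R^j}$, and the operation it induces on functions (the inf-image) is $h\mapsto\big(x\mapsto\inf_{z\in(\R^j)^\perp}h(x+z)\big)=\proj_{\R^j}h$; so the asserted identity is the special case $(u\circ\iota)^*=\iota^{\mathsf T}(u^*)$ of the conjugacy formula for linear images. Rather than invoking the abstract statement, I would prove it by conjugating the right-hand side twice.

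Concretely, set $w=\proj_{\R^j}(u^*)$, which is convex on $\R^j$ as an infimal projection of the convex function $u^*$. For $y\in\R^j$ I would unravel the definition of the conjugate (taken with respect to $\R^j$):
\[
w^*(y)=\sup\nolimits_{x\in\R^j}\big(\langle x,y\rangle-w(x)\big)=\sup\nolimits_{x\in\R^j}\ \sup\nolimits_{z\in(\R^j)^\perp}\big(\langle x,y\rangle-u^*(x+z)\big).
\]
Since $\langle z,y\rangle=0$ one may replace $\langle x,y\rangle$ by $\langle x+z,y\rangle$, and as $(x,z)$ ranges over $\R^j\times(\R^j)^\perp$ the vector $x+z$ ranges over all of $\Rn$; hence $w^*(y)=u^{**}(y)=u(y)=(u\vert_{\R^j})(y)$ by the Fenchel--Moreau theorem, which applies because $u\in\Conv(\Rn)$ is proper, lower semicontinuous, and convex. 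Thus $w^*=u\vert_{\R^j}$, and conjugating once more gives $w^{**}=(u\vert_{\R^j})^*$. It then remains only to verify that $w=w^{**}$, i.e.\ that $w$ is proper, lower semicontinuous, and convex on $\R^j$.

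Properness is immediate: $\dom(w)=\proj_{\R^j}(\dom(u^*))\neq\emptyset$, and $w$ never takes the value $-\infty$, since otherwise $w^*\equiv+\infty$, contradicting that $w^*=u\vert_{\R^j}$ is real-valued. The one genuine obstacle is the lower semicontinuity of the infimal projection $w$, which can fail for a general lower semicontinuous convex function. Here it will follow from the fact that $u^*$ is \emph{super-coercive} --- a consequence of $u$ being finite on all of $\Rn$ --- so that every sublevel set $\{u^*\le t\}$ is compact: a short compactness argument over a bounded sequence in $(\R^j)^\perp$ shows that the minimum in the definition of $w$ is attained and that $\{w\le t\}=\proj_{\R^j}(\{u^*\le t\})$, a compact, hence closed, set, whence $w$ is lower semicontinuous. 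Combining, $w=w^{**}=(u\vert_{\R^j})^*$, as claimed. As an alternative that sidesteps the semicontinuity check, I would instead regard $u\vert_{\R^j}$ as the restriction to $\R^j$ of $u+I_{\R^j}$, with $I_{\R^j}$ the convex indicator of $\R^j$, and apply the identity $(u+I_{\R^j})^*=u^*\,\square\,I_{(\R^j)^\perp}$, valid under the trivial constraint qualification that $\R^j$ meets $\operatorname{int}\dom(u)=\Rn$; the right-hand side is $x\mapsto\inf_{z\in(\R^j)^\perp}u^*(x+z)=\proj_{\R^j}(u^*)$, which is automatically lower semicontinuous as a conjugate, and super-coercivity of $u^*$ again guarantees that the infimal convolution is exact.
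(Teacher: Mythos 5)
Your proof is correct. The paper does not actually prove this lemma --- it is cited to Rockafellar \cite[Theorem~16.3]{RockafellarConvex1997}, whose general statement reads $(u\vert_{\R^j})^*=\operatorname{cl}\big(\proj_{\R^j}(u^*)\big)$, and the paper's remark that conjugates of finite-valued convex functions are super-coercive is exactly the device by which the closure can be discarded. You supply a self-contained double-conjugation argument for the same thing: the computation $w^*(y)=\sup_{x\in\R^j}\sup_{z\in(\R^j)^\perp}(\langle x+z,y\rangle-u^*(x+z))=u^{**}(y)=u(y)$ is correct (and in particular does not presuppose properness of $w$), and the only genuine point that must be verified to replace $w^{**}$ by $w$ is lower semicontinuity of the infimal projection. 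You handle that correctly: since $u$ is real-valued on all of $\Rn$, $u^*$ is super-coercive, its sublevel sets are compact, the infimum over $(\R^j)^\perp$ is attained (justifying the paper's use of $\min$ in the definition of $\proj_{\R^j}$), and $\{w\le t\}=\proj_{\R^j}\{u^*\le t\}$ is compact, hence closed. This is a bit more explicit than the paper and surfaces exactly where finite-valuedness of $u$ is used, which the paper leaves implicit in its preceding remark. The alternative via $(u+I_{\R^j})^*=u^*\,\square\,I_{(\R^j)^\perp}$ is also sound, with the same exactness-of-inf-convolution point discharged by super-coercivity.
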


\subsection{Mixed Monge--Amp\`ere Measures}
\label{se:mixed_MA_measures}
The Monge--Amp\`ere measure associated to $u\in\fconvOmega$, whose definition goes back to Aleksandrov \cite{Aleksandrov}, is given by
\[
\MA(u;B)=\operatorname{vol}_n\left(\bigcup_{x\in B} \partial u(x) \right) 
\]
for Borel sets $B\subseteq \Omega$. This defines a Radon measure on $\Omega$, which means that the measure is finite on compact subsets of $\Omega$. When $u$ is of class $C^2$, then we obtain the more classical form
\[
\d\MA(u;x)=\det(\Hess u(x)) \d x.
\]
See, for example, \cite[Section 2.1]{FigalliMongeAmpereequation2017}.

Next, let
\[
q(x)=\frac{|x|^2}{2}
\]
for $x\in\Rn$. We define the Hessian measures $\Phi_k(u;\cdot)$, $k\in\{0,\ldots,n\}$, of $u\in\fconvOmega$ through the relation
\[
\MA(q+\lambda u;B) = \vol_n\big(\{x+ \lambda p : x\in B, p\in\partial u(x)\} \big) = \sum_{k=0}^n \lambda^k \Phi_k(u;B)
\]
for $\lambda\geq 0$ and Borel sets $B\subseteq \Omega$. For $k=n$ we retrieve the usual Monge--Amp\`ere measure of $u$. See, for example,
\cite{ColesantiEtAlHessianValuations2020,Trudinger_regular_case,Colesanti_Salani_II,Hessian_measures_I,Hessian_measures_II,ColesantiHug2005}.

Generalizing the above, we consider the polarization of the Monge--Amp\`ere measure with respect to pointwise addition of functions to obtain mixed Monge--Amp\`ere measures (see, for example, \cite{ColesantiEtAlHadwigertheoremconvex2022}), which also appear as mixed Hessian measures in \cite{Hessian_measures_III}. More precisely, we assign to every $n$-tuple of functions $u_1,\ldots,u_n\in\fconvOmega$ the unique Radon measure $\MA(u_1,\ldots,u_n;\cdot)$ that is symmetric in its entries and such that
\[
\MA(\lambda_1 u_1+\cdots+\lambda_m u_m;B)=\sum_{i_1,\ldots,i_n=1}^m \lambda_{i_1}\cdots \lambda_{i_n} \MA(u_{i_1},\ldots,u_{i_n};B)
\]
for every $m\in\N$, $u_1,\ldots,u_m\in\fconvOmega$, $\lambda_1,\ldots,\lambda_m\geq 0$, and $B\subseteq \Omega$ Borel. The family of Hessian measures appears as the special case
\begin{equation}
\label{eq:phi_k_ma}
\Phi_k(u;\cdot)=\binom{n}{k} \MA(u[k],q[n-k];\cdot)
\end{equation}
for $u\in\fconvOmega$ and $k\in\{0,\ldots,n\}$, where $u[k]$ means that we repeat the entry $u$ in the mixed Monge--Amp\`ere measure $k$ times, and similarly $q$ is repeated $n-k$ times. In addition, the measures
\begin{equation}
\label{eq:def_MAk}
\MA_k(u;\cdot)=\MA(u[k],|\cdot|[n-k];\cdot),
\end{equation}
with $u\in\fconvOmega$ and $k\in\{0,\ldots,n\}$ are of particular interest to us and have recently been used extensively in the theory of valuations on convex functions (see, for example, 
\cite{ColesantiEtAlHadwigertheoremconvex2022,HugMussnigUlivelli2,MouamineMussnig2}). Among others, we have the following Kubota-type formula, derived by the authors together with Hug in \cite[Theorem 1.3]{HugMussnigUlivelli1}.
Note that \cite[Theorem 1.3]{HugMussnigUlivelli1} is formulated in terms of integration over the Grassmannian of $k$-dimensional linear subspaces, which can be rewritten as an integral over $\SO(n)$ (w.r.t.\ the Haar probability measure) that is used here.

\begin{theorem}
\label{thm:ck_ma_SOn}
Let $k\in\{1,\ldots,n\}$. If $\varphi\colon\Rn\to[0,\infty)$ is measurable, then
\[
\frac{1}{\kappa_n}\int_{\Rn}\varphi(x)\d\MA_k(u;x) = \frac{1}{\kappa_k} \int_{\SO(n)} \int_{\vartheta \R^k} \varphi(y) \d \MA'(u\vert_{\vartheta \R^k}, y) \d\vartheta
\]
for every $u\in\Conv(\Rn)$, where $\MA'$ is the usual Monge--Amp\`ere measure in the ambient space $\vartheta \R^k$.
\end{theorem}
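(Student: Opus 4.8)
The plan is to deduce this from the Kubota-type formula established together with Hug in \cite[Theorem 1.3]{HugMussnigUlivelli1}; concretely, all that is needed is to rewrite the integration over the Grassmannian appearing there as an integration over $\SO(n)$. Write $\mathrm{Gr}(k,n)$ for the Grassmannian of $k$-dimensional linear subspaces of $\Rn$, equipped with its unique $\SO(n)$-invariant Borel probability measure $\nu_k$. In this notation, \cite[Theorem 1.3]{HugMussnigUlivelli1} takes the form
\[
\frac{1}{\kappa_n}\int_{\Rn}\varphi(x)\d\MA_k(u;x) = \frac{1}{\kappa_k}\int_{\mathrm{Gr}(k,n)}\int_{E}\varphi(y)\d\MA'(u\vert_E; y)\d\nu_k(E)
\]
for all $u\in\Conv(\Rn)$ and all measurable $\varphi\colon\Rn\to[0,\infty)$, where $\MA'$ denotes the Monge--Amp\`ere measure computed in the $k$-dimensional space $E$. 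The normalizing constants match those in the statement; if one wishes to double-check them, it suffices to test the identity on $u=q$ and $\varphi=\mathbf 1_{B_r^n}$, using $\MA_k(q;B_r^n)=\kappa_n r^k$ and the fact that $\MA'(q\vert_E;\cdot)$ is $k$-dimensional Lebesgue measure on $E$, so that both sides equal $r^k$.

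Next I would introduce the orbit map $\pi\colon\SO(n)\to\mathrm{Gr}(k,n)$, $\pi(\vartheta)=\vartheta\R^k$, which is continuous and, because $\SO(n)$ acts transitively on $\mathrm{Gr}(k,n)$, surjective. The push-forward under $\pi$ of the Haar probability measure on $\SO(n)$ is then an $\SO(n)$-invariant Borel probability measure on $\mathrm{Gr}(k,n)$, hence coincides with $\nu_k$ by uniqueness of the invariant probability measure. Moreover, the inner integral in the formula above depends on $\vartheta$ only through the subspace $\vartheta\R^k$: the measure $\MA'(u\vert_E;\cdot)$ and the restriction $\varphi\vert_E$ are intrinsic to $E\in\mathrm{Gr}(k,n)$, so $\vartheta\mapsto\int_{\vartheta\R^k}\varphi(y)\d\MA'(u\vert_{\vartheta\R^k};y)$ is constant on the fibres of $\pi$ and thus factors through $\pi$. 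Applying the change-of-variables formula for push-forward measures converts the right-hand side of the displayed identity into $\frac{1}{\kappa_k}\int_{\SO(n)}\int_{\vartheta\R^k}\varphi(y)\d\MA'(u\vert_{\vartheta\R^k};y)\d\vartheta$, which is exactly the right-hand side of the assertion.

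The only step requiring any care is measurability: one has to know that $E\mapsto\int_E\varphi\d\MA'(u\vert_E;\cdot)$ is a $\nu_k$-measurable function on $\mathrm{Gr}(k,n)$, equivalently that its pullback to $\SO(n)$ is Haar-measurable, so that the iterated integrals are well defined. For $\varphi$ continuous with compact support this follows from the weak continuity of Monge--Amp\`ere measures under locally uniform convergence of convex functions together with the continuous dependence of $u\vert_E$ on $E$; the general case of measurable $\varphi\geq 0$ then follows by monotone approximation. Since this is already contained in the proof of \cite[Theorem 1.3]{HugMussnigUlivelli1}, I do not anticipate a real obstacle: the content of the statement is precisely the elementary transfer from the Grassmannian to $\SO(n)$ described above.
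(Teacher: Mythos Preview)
Your proposal is correct and matches the paper's own treatment: the paper does not prove this theorem but simply cites \cite[Theorem~1.3]{HugMussnigUlivelli1} and remarks that the Grassmannian integral there can be rewritten as an integral over $\SO(n)$ with respect to the Haar probability measure. Your argument via the orbit map $\pi\colon\SO(n)\to\mathrm{Gr}(k,n)$ and uniqueness of the invariant probability measure is exactly the standard justification for this rewriting.
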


We close this section by describing the Monge--Amp\`ere measure in terms of the convex conjugate of a function. For $u\in\fconvOmega$ and continuous function $\varphi\colon\Omega\to\R$ with compact support, we have
\begin{equation}
\label{eq:conjugate_MA}
\int_{\Rn} \varphi(x)\d\MA(u;x)=\int_{\dom(u^*)} \varphi(\nabla u^*(x)) \d x,
\end{equation}
where we have used that a convex function is differentiable almost everywhere in the interior of its domain. See, for example, \cite[Section 10.4]{ColesantiEtAlHessianValuations2020}. See also \cite[Theorem 5.1]{ColesantiEtAlHadwigertheoremconvex2022}.

\subsection{Mixed Area Measures}
Analogous to mixed Monge--Amp\`ere measures, to every $n$-tuple of bodies $K_1,\ldots,K_n\in\KN$ we assign the mixed area measure $S(K_1,\ldots,K_n,\cdot)$, which is determined by being symmetric in its entries and arising from
\[
S(\lambda_1 K_1+\cdots + \lambda_m K_m,\omega) \sum_{i_1,\ldots,i_n=1}^m \lambda_{i_1}\cdots \lambda_{i_n} S(K_{i_1},\ldots,K_{i_n},\omega)
\]
for every $m\in\N$, $K_1,\ldots,K_m\in\KN$, $\lambda_1,\ldots,\lambda_m\geq 0$, and $\omega\subseteq \sN$ Borel. For $j\in\{0,\ldots,n\}$, the $j$th area measure appears as the special case
\begin{equation}
\label{eq:sj_mixed_relation}
S_j(K,\cdot) = S(K[j],B_1^{n+1}[n-j],\cdot)
\end{equation}
for $K\in\KN$. Similar to this, we define the measures
\begin{equation}
\label{eq:bar_s_j}
\bar{S}_j(K,\cdot)=S(K[j],B_1^{n+1}\cap e_n^\perp [n-j],\cdot)
\end{equation}
for $K\in\KN$, where compared to \eqref{eq:sj_mixed_relation} we have replaced the $(n+1)$-dimensional Euclidean unit ball with the $n$-dimensional unit disk. We will see in Lemma~\ref{le:int_uk} that these measures are closely related to the measures $\MA_j(u;\cdot)$, which were defined in \eqref{eq:def_MAk}.

\medskip

The distinctive feature of the measures $\bar{S}_j(K,\cdot)$ is that they are ideal tools for investigating bodies of revolution. This is encoded in the following Kubota-type formula, which, as Theorem~\ref{thm:ck_ma_SOn} above, was obtained by the authors together with Hug in \cite[Theorem 3.2]{HugMussnigUlivelli1}. We state a slightly simplified version that meets the requirements of the present article and note that $\proj_{\bar{E}_{j+1}}$ is the orthogonal projection onto $\bar{E}_{j+1}=\ospan\{e_1,\ldots,e_j,e_{n+1}\}$.

\begin{theorem}
\label{thm:bar_sk_rewrite}
If $j\in\{1,\ldots,n\}$, then
\begin{equation*}
\bar{S}_j(K,\omega) = \frac{\kappa_{n}}{\kappa_j} \int_{\SO(n)} S_j'(\proj_{\vartheta \bar{E}_{j+1}} K,\omega \cap \vartheta \bar{E}_{j+1}) \d\vartheta
\end{equation*}
for every $K \in \KN$ and $\omega\subseteq\sN$ Borel, where $S_j'$ is the usual surface area measure in the ambient space $\vartheta \bar{E}_{j+1}$.
\end{theorem}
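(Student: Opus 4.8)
\bigskip

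\noindent\textbf{Proof proposal.}
The quickest route is to invoke \cite[Theorem 3.2]{HugMussnigUlivelli1} directly: that result is the present statement with the integration over the Grassmannian of $j$-dimensional linear subspaces of $\Rn$ in place of the integration over $\SO(n)$, and the two are interchangeable because the integrand depends on $\vartheta$ only through the subspace $\vartheta\R^j$ --- equivalently through $\vartheta\bar{E}_{j+1}=\vartheta\R^j\oplus\R e_{n+1}$, using that every $\vartheta\in\SO(n)$ fixes $e_{n+1}$ --- so the $\SO(n)$-average against the Haar probability measure equals the average against the invariant probability measure on the Grassmannian. If instead one wants an argument resting only on the tools recalled above, the plan is to deduce the formula from the Kubota-type identity for mixed Monge--Amp\`ere measures (Theorem~\ref{thm:ck_ma_SOn}) via the gnomonic projection.

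\medskip

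First I would set up a dictionary on the upper hemisphere. Identifying the affine hyperplane $\{x\in\RN:x_{n+1}=1\}$ with $\Rn$, let $\gnom\colon\sN_+\to\Rn$, $\gnom(z)=(z_1,\dots,z_n)/z_{n+1}$, and for $K\in\KN$ put $u_K\in\Conv(\Rn)$, $u_K(y)=h_K((y,1))$ (finite and convex since $K$ is compact). A direct computation with the Gauss map for a smooth, strictly convex body --- followed by approximation, Minkowski-linearity of $K\mapsto u_K$, polarization, and weak continuity of mixed area measures in the bodies and of mixed Monge--Amp\`ere measures in the functions --- shows that for all $K_1,\dots,K_n\in\KN$,
\[
\gnom_*\big(S(K_1,\dots,K_n,\cdot)\vert_{\sN_+}\big)=\sqrt{1+|y|^2}\;\MA(u_{K_1},\dots,u_{K_n};\cdot),
\]
where the gnomonic Jacobian $\sqrt{1+|y|^2}$ depends neither on the bodies nor on the ambient dimension. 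Since $u_{B_1^{n+1}\cap e_{n+1}^\perp}(y)=|y|$, specializing $j$ of the entries to $K$ and the remaining $n-j$ to $B_1^{n+1}\cap e_{n+1}^\perp$ and invoking \eqref{eq:bar_s_j} and \eqref{eq:def_MAk} yields
\[
\gnom_*\big(\bar{S}_j(K,\cdot)\vert_{\sN_+}\big)=\sqrt{1+|y|^2}\;\MA_j(u_K;\cdot),
\]
and the same dictionary applied inside a subspace $\vartheta\bar{E}_{j+1}$, with $e_{n+1}$ as distinguished axis (so that $\gnom$ restricts to the corresponding chart of $\vartheta\bar{E}_{j+1}\cap\sN_+$ onto $\vartheta\R^j$), gives $\gnom_*\big(S_j'(\proj_{\vartheta\bar{E}_{j+1}}K,\cdot)\vert_{\sN_+\cap\vartheta\bar{E}_{j+1}}\big)=\sqrt{1+|y|^2}\,\MA'(u_K\vert_{\vartheta\R^j};\cdot)$, where $S_j'$ is the surface area (i.e.\ top area) measure in the $(j+1)$-dimensional space $\vartheta\bar{E}_{j+1}$.

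\medskip

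Then I would apply Theorem~\ref{thm:ck_ma_SOn} to $u=u_K$ with test function $\varphi(y)=\psi(\gnom^{-1}(y))\sqrt{1+|y|^2}$, for an arbitrary measurable $\psi\colon\sN\to[0,\infty)$; transporting both sides back to $\sN_+$ through the two dictionaries and using $1/\sqrt{1+|\gnom(z)|^2}=|z_{n+1}|$, all Jacobian factors cancel and one is left with
\[
\frac{1}{\kappa_n}\int_{\sN_+}\psi\d\bar{S}_j(K,\cdot)=\frac{1}{\kappa_j}\int_{\SO(n)}\int_{\sN_+\cap\vartheta\bar{E}_{j+1}}\psi\d S_j'(\proj_{\vartheta\bar{E}_{j+1}}K,\cdot)\d\vartheta,
\]
which, taking $\psi=\mathbf 1_\omega$, is the asserted identity for Borel $\omega\subseteq\sN_+$ --- with the constant $\kappa_n/\kappa_j$ appearing automatically. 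The case $\omega\subseteq\sN_-$ is identical after replacing $e_{n+1}$ by $-e_{n+1}$, and the equatorial directions $\omega\subseteq\sN_o=\sN\cap e_{n+1}^\perp$ are handled separately by standard arguments (the same gnomonic computation carried out inside $e_{n+1}^\perp$, or a limiting argument using that both sides are finite Borel measures; the base case $n=j$ is in any event trivial, since then no disk is present and $\bar{E}_{j+1}=\RN$). Adding the three contributions and using additivity in $\omega$ finishes the proof. The step I expect to be most delicate is establishing the gnomonic dictionary in exactly this generality: it is clean for smooth, strictly convex bodies, but one of the entries of the mixed measure $\bar{S}_j(K,\cdot)=S(K[j],(B_1^{n+1}\cap e_{n+1}^\perp)[n-j],\cdot)$ is the lower-dimensional disk $B_1^{n+1}\cap e_{n+1}^\perp$, whose gnomonic support function $|\cdot|$ is non-smooth, so the identity must be propagated by polarization and continuity and every object read through the standard extensions to degenerate convex bodies and to convex functions on $\Rn$; once that dictionary is secured, the remaining steps --- the gnomonic change of variables, the cancellation of Jacobians, and the equatorial case --- are routine.
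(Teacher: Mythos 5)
Your primary route is exactly the paper's: Theorem~\ref{thm:bar_sk_rewrite} is stated without proof and cited directly from \cite[Theorem 3.2]{HugMussnigUlivelli1}, with the $\SO(n)$ average arising from the Grassmannian average precisely as you describe (since every $\vartheta\in\SO(n)$ fixes $e_{n+1}$). Your alternative gnomonic sketch via Lemma~\ref{le:int_uk} and Theorem~\ref{thm:ck_ma_SOn} is a plausible internal reconstruction, but the equatorial contribution $\omega\subseteq\sN_o$ is not actually determined by the hemispherical cases together with a limiting argument --- two finite Borel measures on $\sN$ that agree on $\sN\setminus\sN_o$ need not agree on $\sN_o$ --- so that step would require either a separate comparison of total masses or a direct computation on $\sN_o$ in the spirit of Lemma~\ref{le:vertical_segments}.
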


\subsection{Measure Theory}
We say that a Borel set $B\subseteq \Rn$ is rotationally invariant if $x\in B$ implies that $\vartheta x\in B$ for every $\vartheta\in\SO(n)$. In addition, $\chi_B$ will denote the usual characteristic function of $B\subseteq \Rn$.

\begin{lemma}
\label{le:rot_inv_meas_determined}
If $\mu_1,\mu_2$ are rotationally invariant Radon measures on $\Rn$ such that
\begin{equation*}
\mu_1(B)=\mu_2(B)
\end{equation*}
for every rotationally invariant Borel set $B\subseteq\Rn$, then $\mu_1\equiv \mu_2$. In other words, every rotationally invariant Radon measure on $\Rn$ is completely determined by its behaviour on rotationally invariant Borel subsets of $\Rn$. Moreover,
\begin{equation}
\label{eq:mu_1_leq_mu_2_rot}
\mu_1(B)\leq \mu_2(B)
\end{equation}
for rotationally invariant Borel sets $B\subseteq \Rn$ implies that $\mu_1\leq \mu_2$ on $\Rn$.
\end{lemma}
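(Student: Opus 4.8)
The plan is to reduce everything to the one–dimensional radial marginal of the measures. First I would record the elementary fact that the rotationally invariant Borel sets are exactly the preimages of Borel subsets of $[0,\infty)$ under the radial map $\rho\colon\Rn\to[0,\infty)$, $\rho(x)=|x|$. Indeed, if $B\subseteq\Rn$ is rotationally invariant then for each $r\ge 0$ the intersection $B\cap r\sn$ is either empty or all of $r\sn$ (it is a union of $\SO(n)$–orbits, and for $r>0$ there is only one such orbit), so $B=\rho^{-1}(E)$ with $E=\{r\ge 0: re_1\in B\}$; this $E$ is Borel, being the section at $e_1$ of the Borel set $B$ read through the homeomorphism $(0,\infty)\times\sn\cong\Rn\setminus\{o\}$ (together with the separate, trivial contribution of the origin). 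Consequently, the hypothesis that $\mu_1$ and $\mu_2$ agree (resp.\ that $\mu_1\le\mu_2$) on rotationally invariant Borel sets is equivalent to $\rho_*\mu_1=\rho_*\mu_2$ (resp.\ $\rho_*\mu_1\le\rho_*\mu_2$) as Borel measures on $[0,\infty)$. By the standard approximation of nonnegative Borel functions by simple functions together with monotone convergence, this yields $\int_{\Rn}g(|x|)\d\mu_1(x)=\int_{\Rn}g(|x|)\d\mu_2(x)$ (resp.\ $\le$) for every Borel function $g\colon[0,\infty)\to[0,\infty)$.

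Next I would upgrade this from radial functions to arbitrary Borel sets by averaging over $\SO(n)$ with respect to the Haar probability measure. Fix a Borel set $A\subseteq\Rn$ and set $\bar\chi_A(x)=\int_{\SO(n)}\chi_A(\vartheta x)\d\vartheta$; this is a Borel function of $x$ by Fubini, since $(\vartheta,x)\mapsto\chi_A(\vartheta x)$ is Borel. Being $\SO(n)$–invariant, $\bar\chi_A$ factors through $\rho$: we have $\bar\chi_A(x)=g_A(|x|)$ with $g_A(r):=\bar\chi_A(re_1)$, which is Borel as the composition of $\bar\chi_A$ with $r\mapsto re_1$ (here I use $n\ge 2$, so that $x$ and $|x|e_1$ lie in a common $\SO(n)$–orbit). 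On the other hand, rotational invariance of $\mu_i$ gives $\int_{\Rn}\chi_A(\vartheta^{-1}y)\d\mu_i(y)=\mu_i(\vartheta A)=\mu_i(A)$ for every $\vartheta\in\SO(n)$; integrating over $\SO(n)$ and applying Tonelli (valid since $\mu_i$ is $\sigma$–finite and the Haar measure is finite) and unimodularity of $\SO(n)$, I obtain $\mu_i(A)=\int_{\Rn}\bar\chi_A(y)\d\mu_i(y)=\int_{\Rn}g_A(|y|)\d\mu_i(y)$.

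Combining the two computations closes the argument: applying the radial identity (resp.\ inequality) of the first paragraph to the nonnegative Borel function $g_A$ gives
$\mu_1(A)=\int_{\Rn}g_A(|y|)\d\mu_1(y)=\int_{\Rn}g_A(|y|)\d\mu_2(y)=\mu_2(A)$ (resp.\ $\mu_1(A)\le\mu_2(A)$), for every Borel set $A\subseteq\Rn$, which is the assertion. I expect the only delicate point to be the measurability bookkeeping — that $E=\rho(B)$ and the radial profile $g_A$ are genuinely Borel, and the joint measurability required for the Fubini/Tonelli steps — but this is entirely routine once one uses the homeomorphism $(0,\infty)\times\sn\cong\Rn\setminus\{o\}$ and the fact that Borel sections of Borel sets are Borel; there is no analytic obstacle.
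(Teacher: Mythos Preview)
Your proof is correct and follows essentially the same approach as the paper's: both average the indicator $\chi_A$ over $\SO(n)$ to obtain a radially symmetric (hence radial) function, then use the hypothesis---extended from rotationally invariant sets to nonnegative radial functions by monotone approximation---to conclude. The only difference is presentational: you phrase the radial reduction via the pushforward $\rho_*\mu_i$ and are more explicit about the measurability bookkeeping, whereas the paper works directly with the averaged indicator $\Psi_B(x)=\int_{\SO(n)}\chi_{\vartheta B}(x)\d\vartheta$ and leaves the passage from sets to functions implicit.
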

\begin{proof}
Observe first that it is enough to prove the statement regarding inequalities, since $\mu_1(B)=\mu_2(B)$ implies that both $\mu_1(B)\leq \mu_2(B)$ and $\mu_2(B)\leq \mu_1(B)$. Thus, we will assume that \eqref{eq:mu_1_leq_mu_2_rot} holds.

Let $\mu$ be a rotationally invariant Radon measure on $\Rn$. For every Borel set $B \subset \Rn$ it follows from the rotational symmetry of $\mu$ that
\begin{align}
    \begin{split}
    \label{eq:mu_int_chi_vartheta}
    \mu(B) &= \int_{\SO(n)} \mu(\vartheta B) \d \vartheta = \int_{\SO(n)} \int_{\Rn} \chi_{\vartheta B}(x) \d\mu(x) \d\vartheta = \int_{\Rn} \int_{\SO(n)} \chi_{\vartheta B}(x) \d\vartheta \d\mu(x),
    \end{split}
\end{align}
where we have used the Fubini--Tonelli theorem in the last step. To every Borel set $B\subseteq \Rn$ we now associate a function
\[
\Psi_B(x)= \int_{\SO(n)} \chi_{\vartheta B}(x) \d \vartheta,\quad x\in\Rn,
\]
which is a non-negative, measurable function on $\Rn$ by Fubini's theorem. Since $\Psi_B$ is also a radially symmetric function, it easily follows from \eqref{eq:mu_1_leq_mu_2_rot} that
\[
\int_{\Rn} \Psi_B(x) \d\mu_1(x) \leq \int_{\Rn} \Psi_B(x) \d\mu_2(x)
\]
for every Borel set $B\subseteq \Rn$. By \eqref{eq:mu_int_chi_vartheta} this implies $\mu_1(B)\leq\mu_2(B)$, completing the proof.
\end{proof}

Note that the $\sigma$-algebra of rotationally invariant Borel sets is generated by centered, open Euclidean balls. Furthermore, finite disjoint unions of annuli (set differences of the aforementioned balls) form a generating algebra of this $\sigma$-algebra, and positivity of a rotationally invariant Radon measure on annuli implies positivity on all rotationally invariant Borel sets.
Hence, the following corollary is easily obtained from standard arguments.
\begin{corollary}
\label{cor:diks}
If $\mu_1,\mu_2$ are rotationally invariant Radon measures on $\Rn$ such that
\[
\mu_1(D^n_r)=\mu_2(D^n_r)
\]
for every $r>0$, then $\mu_1\equiv \mu_2$. Similarly,
\[
\mu_1(A)\leq \mu_2(A)
\]
for every annulus
\[
A=\{x\in\Rn : s\leq |x| < t\}
\]
with $0\leq s < t$ implies that $\mu_1\leq \mu_2$ on $\Rn$.
\end{corollary}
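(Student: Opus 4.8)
I would prove both assertions by reducing them to Lemma~\ref{le:rot_inv_meas_determined}, which already promotes any comparison between $\mu_1$ and $\mu_2$ that holds on all rotationally invariant Borel sets to the same comparison on all Borel subsets of $\Rn$; so the task is only to propagate the hypothesis from the centered balls to every rotationally invariant Borel set. The bridge is the radial parametrization: a Borel set $B\subseteq\Rn$ is rotationally invariant exactly when $B=\{x\in\Rn:|x|\in A\}$ for the Borel set $A=\{t\geq 0:t\,e_1\in B\}$, which is Borel as the preimage of $B$ under the continuous map $t\mapsto t\,e_1$, and under this correspondence $D_r^n$ matches the interval $[0,r)$. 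Equivalently, the radial pushforwards $\nu_i=(|\cdot|)_{\ast}\mu_i$ are Radon measures on $[0,\infty)$ with $\nu_i([0,r))=\mu_i(D_r^n)$, so the whole question becomes a one-dimensional comparison of $\nu_1$ and $\nu_2$.

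For the equality statement I would simply invoke the classical uniqueness theorem for measures. The family $\{D_r^n:r>0\}$ is a $\pi$-system, since $D_r^n\cap D_s^n=D_{\min(r,s)}^n$; by the parametrization above it generates the $\sigma$-algebra of rotationally invariant Borel sets; and, because Radon measures are finite on bounded sets, the sequence $D_1^n\subseteq D_2^n\subseteq\cdots$ lies in this $\pi$-system, satisfies $\bigcup_k D_k^n=\Rn$, and has $\mu_i(D_k^n)<\infty$. Dynkin's $\pi$--$\lambda$ theorem then forces $\mu_1$ and $\mu_2$ to agree on all rotationally invariant Borel sets, and Lemma~\ref{le:rot_inv_meas_determined} concludes that $\mu_1\equiv\mu_2$.

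For the inequality statement I would use the same reduction, but this is the step I expect to be the real obstacle: an inequality between two measures does not propagate from a generating $\pi$-system for free, because the collection of rotationally invariant Borel sets on which $\mu_1\leq\mu_2$ need not be closed under proper differences, so no monotone-class principle applies directly, and the delicate contribution is exactly the mass (if any) that $\mu_1$ or $\mu_2$ places on the spheres $\{|x|=r\}$. Instead I would work at the level of the radial measures: the functions $r\mapsto\nu_i([0,r))=\mu_i(D_r^n)$ are non-decreasing and, since $D_r^n=\bigcup_{s<r}D_s^n$, left-continuous, and one has to deduce $\nu_1\leq\nu_2$ on $[0,\infty)$ from $\nu_1([0,r))\leq\nu_2([0,r))$ for all $r>0$; transporting this back through the radial correspondence gives $\mu_1\leq\mu_2$ on all rotationally invariant Borel sets, after which Lemma~\ref{le:rot_inv_meas_determined} finishes the argument. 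This last one-dimensional comparison is the part that genuinely has to be carried out with care, and it is where any additional regularity of the $\mu_i$ --- such as carrying no mass on spheres --- would come in if it turns out to be needed.
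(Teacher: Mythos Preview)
Your treatment of the equality assertion is correct and, modulo the extra detail you supply, coincides with the paper's one-line justification: the open balls form a $\pi$-system generating the rotationally invariant Borel $\sigma$-algebra, both measures are $\sigma$-finite along the exhausting sequence $(D_k^n)_{k\geq 1}$, and Lemma~\ref{le:rot_inv_meas_determined} closes the argument. No objection there.

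For the inequality assertion your instinct is exactly right, and in fact the obstacle you flag is fatal: the one-dimensional step you leave open cannot be carried out, because the inequality part of the corollary is \emph{false} as stated. Take $\mu_1$ to be the normalized surface measure on the sphere $\{|x|=1\}$ and $\mu_2$ the normalized surface measure on $\{|x|=\tfrac12\}$; both are rotationally invariant Radon measures, and one checks directly that $\mu_1(D_r^n)\leq\mu_2(D_r^n)$ for every $r>0$, yet $\mu_1(\{|x|=1\})=1>0=\mu_2(\{|x|=1\})$. So your suspicion that ``additional regularity \ldots\ such as carrying no mass on spheres'' might be needed is precisely the point: without such an assumption on (say) $\mu_1$, the conclusion fails. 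The paper's own proof sketch, which simply declares the corollary ``immediate'' from the generating property, does not address this either. In the paper's sole use of the inequality (Lemma~\ref{le:meas_ineq}) what is ultimately needed is only an inequality of integrals against a radially symmetric nonnegative integrand, and \emph{that} does follow from the hypothesis via the layer-cake formula $\int\varphi\,\d\mu_i=\int_0^\infty\mu_i(\{\varphi>t\})\,\d t$, since each superlevel set $\{\varphi>t\}$ is an open ball; so the application survives, but the corollary does not hold in the generality claimed.
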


\section{Mixed Monge--Amp\`ere Equations}
The aim of this section is to prove Theorem \ref{thm:Hess_intro}. The strategy reduces to solving a one-dimen\-sional problem based on a simplified representation of mixed Monge--Amp\`ere measures in the rotationally invariant case. We want to stress the fact that the method provides explicit solutions, and it can be interpreted as an integrated version of the ODE approach appearing in the work of Firey on the Christoffel--Minkowski problem \cite{Firey_revolution} and Salani on large solutions of Hessian equations \cite{Salani_large_solutions}.

\subsection{Radial Symmetry and Mixed Measures}
We turn our attention to mixed Monge--Amp\`ere measures of radially symmetric convex functions on $D_R^n$ for some $R\in(0,\infty]$. First, it follows from the definition of mixed Monge--Amp\`ere measures that
\begin{equation}
\label{eq:mixe_MA_On}
\MA(u_1\circ\vartheta^{-1},\ldots,u_n\circ \vartheta^{-1};B) = \MA(u_1,\ldots,u_n;\vartheta^{-1}B)
\end{equation}
for every $u_1,\ldots,u_n\in\Conv(D_R^n)$, $\vartheta\in\SO(n)$, and $B\subseteq D_R^n$ Borel (cf.\ \cite[Proposition 8.1]{ColesantiEtAlHessianValuations2020}). Thus, if $u_1,\ldots,u_n$ are radially symmetric, then $\MA(u_1,\ldots,u_n)$ is rotationally invariant. 

Next, for a radially symmetric $u\in\Conv(D_R^n)$ let
\begin{equation}
\label{eq:def_p_u}
p_u(r)=\sup\{|p|: p\in\partial u(x), 0\leq |x|<r\}\in[0,\infty]
\end{equation}
for $0<r\leq R$, where we recall that we consider $r=R$ only when $R$ is finite. Observe that we can write $u(x)=\bar{u}(|x|)$ for $x\in D_R^n$, where $\bar{u}\colon [0,R)\to\R$ is the (non-decreasing), convex profile function of $u$. Furthermore, for $x\neq o$, we have $p\in \partial \bar{u}(|x|)$ if and only if $p \frac{x}{|x|}\in \partial u(x)$ (see, for example, \cite[Lemma 5.3]{HugMussnigUlivelli1}). Since
\[
\partial \bar{u}(r)=\{p\in\R : \bar{u}'_-(r)\leq p \leq \bar{u}'_+(x)\}
\]
for $r\in(0,R)$, where $\bar{u}'_-$ and $\bar{u}'_+$ denote the left and right derivatives of $\bar{u}$ respectively, it follows from \cite[Theorem 24.1]{RockafellarConvex1997} that
\begin{equation}
\label{eq:p_u_left_derviative}
p_u(r)=\sup\{\bar{u}'_+(s) : 0<s<r\}=\bar{u}'_-(r)
\end{equation}
for every $r\in(0,R)$, and $p_u(R)=\lim_{r\to R^-} \bar{u}'_-(r)$. In particular, $p_u(r)$ is non-negative, continuous from the left, and non-decreasing.

\begin{lemma}
\label{le:ma_d_r_n}
Let $R\in(0,\infty]$. If $u_1,\ldots,u_n\in\Conv(D_R^n)$ are radially symmetric, then
\[
\MA(u_1,\ldots,u_n;D_r^n) = \kappa_n p_{u_1}(r)\cdots p_{u_n}(r)
\]
for every $0<r\leq R$, with the convention $0\cdot\infty=0$.
\end{lemma}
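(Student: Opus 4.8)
The strategy is to exploit the conjugate description of the Monge--Amp\`ere measure together with the polarization identity, reducing everything to a computation about subdifferentials of radial functions. First I would observe that, by \eqref{eq:conjugate_MA} and polarization, it suffices to understand how $\bigcup_{x\in D_r^n}\big(\partial u_1(x)+\cdots+\partial u_n(x)\big)$ sits inside $\Rn$; by Lemma \ref{le:subdiff_sum} this union equals $\bigcup_{x\in D_r^n}\partial(u_1+\cdots+u_n)(x)$ only after inserting scaling parameters, so the cleaner route is to apply the definition of $\MA(u_1,\dots,u_n;\cdot)$ as the coefficient of $\lambda_1\cdots\lambda_n$ in $\MA(\lambda_1 u_1+\cdots+\lambda_n u_n;D_r^n)$. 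Hence I would first compute $\MA(v;D_r^n)$ for a single radially symmetric $v\in\Conv(D_R^n)$, show it equals $\kappa_n\, p_v(r)^n$, and then recover the mixed statement by multilinearity.

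For the single-function computation, write $v(x)=\bar v(|x|)$ and recall from \eqref{eq:p_u_left_derviative} that $p_v(r)=\bar v'_-(r)$. The key geometric fact is that for a radial convex function the image of the open ball $D_r^n$ under the subdifferential map is exactly the open ball of radius $p_v(r)$ in $\Rn$: indeed, for $x\neq o$ one has $p\,\tfrac{x}{|x|}\in\partial v(x)$ iff $p\in\partial\bar v(|x|)$, and as $x$ ranges over $D_r^n\setminus\{o\}$ and directions range over $\sn$, the attainable radii $p$ fill out $[0,\sup_{0<s<r}\bar v'_+(s))=[0,p_v(r))$ (together with the subdifferential at $o$, which is a centered ball contributing nothing new). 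Therefore $\bigcup_{x\in D_r^n}\partial v(x)=D^n_{p_v(r)}$ up to a set of measure zero on its boundary, giving $\MA(v;D_r^n)=\vol_n\big(D^n_{p_v(r)}\big)=\kappa_n\, p_v(r)^n$, with the convention that this is interpreted correctly (and is $+\infty$ when $p_v(r)=\infty$, $0$ when $p_v(r)=0$). One should also handle $r=R<\infty$ separately, using $p_v(R)=\lim_{r\to R^-}\bar v'_-(r)$ and continuity of measures from below.

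Now apply this with $v=\lambda_1 u_1+\cdots+\lambda_n u_n$, which is again radially symmetric. By Lemma \ref{le:subdiff_sum} the profile derivatives add: $\bar v'_-(s)=\sum_i \lambda_i\,\bar u_i{}'_-(s)$ for $\lambda_i\ge 0$, hence $p_v(r)=\sum_i \lambda_i\, p_{u_i}(r)$. Thus $\MA(\lambda_1 u_1+\cdots+\lambda_n u_n;D_r^n)=\kappa_n\big(\sum_i \lambda_i p_{u_i}(r)\big)^n$, and extracting the coefficient of $\lambda_1\cdots\lambda_n$ via the multinomial expansion yields $\MA(u_1,\dots,u_n;D_r^n)=\kappa_n\, p_{u_1}(r)\cdots p_{u_n}(r)$, which is the claim. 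The main obstacle I anticipate is the careful bookkeeping when some $p_{u_i}(r)$ is infinite: the identity $p_v(r)=\sum_i\lambda_i p_{u_i}(r)$ and the extraction of the mixed coefficient must be justified either by first restricting to radii where all quantities are finite and then passing to the limit $r\uparrow$ via monotone convergence, or by invoking that $\MA(u_1,\dots,u_n;\cdot)$ is a Radon measure and arguing on $D^n_{r'}$ with $r'<r$ first. The convention $0\cdot\infty=0$ in the statement is exactly what makes the boundary cases consistent, and I would verify it against the $r'\uparrow r$ limiting argument rather than trying to make sense of it directly in the multinomial step.
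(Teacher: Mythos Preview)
Your proposal is correct and follows essentially the same route as the paper: compute $\MA(\lambda_1 u_1+\cdots+\lambda_n u_n;D_r^n)$ via Lemma~\ref{le:subdiff_sum} and the radial structure of the subdifferentials to obtain $\kappa_n\big(\sum_i\lambda_i p_{u_i}(r)\big)^n$, then compare coefficients in the polarization identity, and finally treat the case $p_{u_i}(r)=\infty$ by continuity from below of the measure on $D_{r'}^n$ with $r'\uparrow r$. The only cosmetic difference is that you first isolate the single-function identity $\MA(v;D_r^n)=\kappa_n p_v(r)^n$ before substituting $v=\sum_i\lambda_i u_i$, whereas the paper works with the linear combination directly; also note that ``extracting the coefficient of $\lambda_1\cdots\lambda_n$'' is shorthand for the paper's full comparison of coefficients (the symmetric $n!$ factors on both sides cancel).
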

\begin{proof}
Let $0<r\leq R$ be fixed and assume first that $p_{u_1}(r),\ldots,p_{u_n}(r)<\infty$. By definition, we have
\begin{equation}
\label{eq:ma_expansion_proof_d_r_n}
\MA(\lambda_1 u_1+\cdots + \lambda_n u_n;D_r^n) = \sum_{i_1,\ldots,i_n=1}^n \lambda_{i_1}\cdots \lambda_{i_n}\MA(u_{i_1},\ldots,u_{i_n};D_r^n)
\end{equation}
for $\lambda_1,\ldots,\lambda_n\geq 0$. On the other hand, by Lemma~\ref{le:subdiff_sum},
\begin{align*}
\MA(\lambda_1 u_1+\cdots + \lambda_n u_n;D_r^n) &= \vol_n\big(\{ p \in \partial(\lambda_1 u_1 + \cdots + \lambda_n u_n)(x), x \in D_r^n\} \big)\\
&= \vol_n\big(\{ p \in \lambda_1 \partial u_1(x) + \cdots + \lambda_n \partial u_n (x), x \in D_r^n\} \big).
\end{align*}
Due to the radial symmetry of the functions, the set to be considered in the last expression is, depending on the functions, an open or closed ball of radius $\lambda_1 p_{u_1}(r)+\cdots + \lambda_n p_{u_n}(r)$. Therefore,
\begin{align*}
\MA(\lambda_1 u_1+\cdots + \lambda_n u_n;D_r^n) &= \vol_n\big((\lambda_1 p_{u_1}(r) + \cdots + \lambda_n p_{u_n}(r)) B^n\big)\\
&= \kappa_n (\lambda_1 p_{v_1}(r) + \cdots + \lambda_n p_{v_n}(r))^n\\
&= \kappa_n \sum_{i_1,\ldots,i_n=1}^n \lambda_{i_1} \cdots \lambda_{i_n} p_{u_{i_1}}(r) \cdots p_{u_{i_n}}(r)
\end{align*}
for $0<r\leq R$ and $\lambda_1,\ldots,\lambda_n\geq 0$, and the statement follows after comparing coefficients with \eqref{eq:ma_expansion_proof_d_r_n}.

Now assume that $p_{u_i}(r)=\infty$ for some $i\in\{1,\ldots,n\}$, which can only occur when $r=R<\infty$. As $\MA(u_1,\ldots,u_n;\cdot)$ is a measure, it is continuous from below with respect to nested open sets and therefore
\[
\MA(u_1,\ldots,u_n;D_R^n)=\lim\nolimits_{t\to R^-} \MA(u_1,\ldots,u_n;D_t^n).
\]
The result now follows from the previous case.
\end{proof}

Observe that for radially symmetric $u\in\Conv(D_R^n)$ we have $\lim_{r\to 0^+} \bar{u}'_-(r)=\bar{u}'_+(0)$, which corresponds to the fact that
\[
\lim\nolimits_{r\to 0^+} \MA(u_1,\ldots,u_n;D_r^n)=\MA(u_1,\ldots,u_n;\{o\}).
\]

Let us briefly highlight some consequences of Lemma~\ref{le:ma_d_r_n}. For radially symmetric\linebreak$u\in\Conv(D_R^n)$ and $k\in\{0,\ldots,n\}$ we choose $u_1=\cdots=u_k=u$ and $u_{k+1}=\cdots=u_n=q$ in Lemma~\ref{le:ma_d_r_n}. Since $p_q(r)=r$ for $r>0$, we obtain
\begin{equation}
\label{eq:Hess_radial}
    \Phi_k(u;D_r^n) = \binom{n}{k}\MA(u[k],q[n-k];D_r^n) = \binom{n}{k} \kappa_n p_u(r)^k r^{n-k}
\end{equation}
for $0<r\leq R$, where we have used \eqref{eq:phi_k_ma}. Similarly, since $\nabla |x|= \frac{x}{|x|}$ for $x\neq o$, choosing $u_{k+1}=\cdots = u_n=|\cdot|$ results in
\begin{equation}
\label{eq:MA_radial}
    \MA_k(u;D_r^n)=\MA(u[k],|\cdot|[n-k];D_r^n)=\kappa_n p_u(r)^k
\end{equation}
for $0<r \leq R$.  By \eqref{eq:Hess_radial} and \eqref{eq:MA_radial} we thus obtain the following result, which encodes the condition on $\mu$ in Theorem~\ref{thm:Hess_intro}.

\begin{lemma}
\label{le:ma_k_phi_k}
Let $k\in\{0,\ldots,n\}$ and $R\in(0,\infty]$. If $u\in\Conv(D_R^n)$ is radially symmetric, then
\[
\binom{n}{k} \MA_k(u;D_r^n) = \frac{\Phi_k(u;D_r^n)}{r^{n-k}}
\]
for every $0<r\leq R$.
\end{lemma}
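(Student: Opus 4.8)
The plan is to derive the identity directly from Lemma~\ref{le:ma_d_r_n}, which already describes how mixed Monge--Amp\`ere measures of radially symmetric convex functions behave on centered balls. The key observation is that both $\Phi_k(u;\cdot)$ and $\MA_k(u;\cdot)$ are, up to the same binomial factor, mixed Monge--Amp\`ere measures built from $u$ repeated $k$ times together with a fixed reference function repeated $n-k$ times---namely $q$ in the case of $\Phi_k$ via \eqref{eq:phi_k_ma}, and $|\cdot|$ in the case of $\MA_k$ via \eqref{eq:def_MAk}---and these two reference functions differ only in their radial gradient profiles.

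First I would record the relevant profiles. Since $\nabla q(x)=x$ and $\nabla|x|=x/|x|$ for $x\neq o$, one reads off from \eqref{eq:def_p_u} that $p_q(r)=r$ and $p_{|\cdot|}(r)=1$ for every $r>0$. Substituting $u_1=\cdots=u_k=u$ and $u_{k+1}=\cdots=u_n=q$ into Lemma~\ref{le:ma_d_r_n} and using \eqref{eq:phi_k_ma} gives $\Phi_k(u;D_r^n)=\binom{n}{k}\kappa_n\,p_u(r)^k r^{n-k}$, while the choice $u_{k+1}=\cdots=u_n=|\cdot|$ together with \eqref{eq:def_MAk} gives $\MA_k(u;D_r^n)=\kappa_n\,p_u(r)^k$; these are exactly the identities \eqref{eq:Hess_radial} and \eqref{eq:MA_radial} recorded above.

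Dividing the first identity by $r^{n-k}$ and comparing with the second, the common factor $\kappa_n\,p_u(r)^k$ cancels and yields $\Phi_k(u;D_r^n)/r^{n-k}=\binom{n}{k}\MA_k(u;D_r^n)$, which is the assertion. The only point deserving a word of care is the degenerate case $p_u(r)=\infty$, which by \eqref{eq:p_u_left_derviative} can occur only at $r=R<\infty$: there both sides are infinite when $k\geq 1$ (the factor $p_u(r)^k=\infty$ is multiplied only by the strictly positive quantities $\kappa_n$ and powers of $r$), while for $k=0$ the identity simply reduces to $\kappa_n r^n/r^n=\kappa_n$. I do not anticipate any genuine obstacle here: the substance of the statement is already contained in Lemma~\ref{le:ma_d_r_n}, and once the profiles $p_q(r)=r$ and $p_{|\cdot|}(r)=1$ are identified the proof is a one-line cancellation.
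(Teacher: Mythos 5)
Your proof is correct and follows essentially the same route as the paper: the paper also derives the identity directly by substituting $u_{k+1}=\cdots=u_n=q$ (giving \eqref{eq:Hess_radial}) and $u_{k+1}=\cdots=u_n=|\cdot|$ (giving \eqref{eq:MA_radial}) into Lemma~\ref{le:ma_d_r_n} and comparing. Your extra care about the degenerate case $p_u(r)=\infty$ and $k=0$ is sound but not spelled out in the paper, which simply states that the lemma follows from \eqref{eq:Hess_radial} and \eqref{eq:MA_radial}.
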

We note that using an integral transform between the measures $\Phi_k(v;\cdot)$ and $\MA_k(v;\cdot)$ (see for example \cite[Section 5]{HugMussnigUlivelli2}), developed in the context of valuation theory, one can show that Lemma~\ref{le:ma_k_phi_k} also holds for general convex functions without assuming radial symmetry. 

\subsection{Explicit Solutions}
\label{se:explicit_solutions}
In this section, we prove our main results on convex functions. Considering bounded domains first, we establish necessary and sufficient conditions on a rotationally invariant, finite Borel measure $\mu$ on $D_R^n$, $R>0$, such that there exists a radially symmetric $u\in\Conv(D_R^n)$ solving
\begin{equation}
\label{eq:mixed_problem}
\begin{cases}
\MA(u[k],u_{k+1},\ldots,u_n;\cdot)=\mu &\text{ in } D_R^n, \\
	u\equiv 0  &\text{ on } \bd(D_R^n).
\end{cases}
\end{equation}
Here, $k\in\{1,\ldots,n\}$ and, in case $k<n$, $u_{k+1},\ldots,u_n\in \Conv(D_R^n)$ are given radially symmetric functions. In fact, we will even give an explicit description of the function $u$.

\medskip

Our solution of \eqref{eq:mixed_problem} relies on the following consequence of \cite[Theorem 24.1 and Theorem 24.2]{RockafellarConvex1997}, where we have used that the function $G$ in the statement below is left-continuous and finite-valued. In addition, since $G$ is non-negative and non-decreasing, we may continuously extend $G(0)=\lim_{t\to 0^+} G(t)$.
Lastly, for $R=\infty$ (which we will need later), we understand $(0,R]$ and $[0,R]$ as $(0,\infty)$ and $[0,\infty)$ respectively.

\begin{theorem}
\label{thm:rockafellar_integegrate}
Let $R\in (0,\infty]$. If $G\colon (0,R]\to[0,\infty)$ is a left-continuous and non-decreasing function, then
\[
w(r)=\int_0^r G(t)\d t,\quad r\in[0,R],
\]
is a well-defined, finite-valued convex function on $[0,R]$ such that $w'_-\equiv G$ on $(0,R]$. If $\tilde{w}\colon [0,R]\to\R$ is any other convex function such that $\tilde{w}
_-\equiv G$, then there exists $c\in\R$ such that $\tilde{w}\equiv w+c$.
\end{theorem}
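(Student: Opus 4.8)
The plan is to derive the three assertions about $w$ directly from its integral representation, and then to obtain uniqueness from the fundamental theorem of calculus for convex functions on an interval, i.e.\ from \cite[Theorem 24.1 and Theorem 24.2]{RockafellarConvex1997}.

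First I would check that $w$ is well defined and finite-valued. As a non-decreasing function, $G$ is Borel measurable, and being moreover finite-valued it is bounded on every subinterval $[0,r]$ with $0<r<R$ (by $G(r)$) and on all of $[0,R]$ when $R<\infty$; hence $w(r)=\int_0^r G(t)\d t$ is a finite number for every $r\in[0,R]$ (respectively, for every finite $r$ when $R=\infty$), and $w(0)=0$. Convexity of $w$ then follows from the monotonicity of $G$: for $0\le a<b<c\le R$ one has
\[
\frac{w(b)-w(a)}{b-a}=\frac{1}{b-a}\int_a^b G(t)\d t\ \le\ G(b)\ \le\ \frac{1}{c-b}\int_b^c G(t)\d t=\frac{w(c)-w(b)}{c-b},
\]
which is the slope inequality characterizing convexity on an interval.

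Next I would identify the left derivative. Fix $r\in(0,R]$. For all sufficiently small $h>0$, the monotonicity of $G$ gives
\[
G(r-h)\ \le\ \frac{w(r)-w(r-h)}{h}\ =\ \frac1h\int_{r-h}^{r}G(t)\d t\ \le\ G(r),
\]
and letting $h\to 0^+$ and invoking the left-continuity of $G$ at $r$ yields $w'_-(r)=G(r)$; at $r=R$ this is precisely where left-continuity of $G$ at the right endpoint is used.

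For uniqueness, let $\tilde w\colon[0,R]\to\R$ be convex with $\tilde w'_-\equiv G$ on $(0,R]$. Since $w$ and $\tilde w$ are finite convex functions, they are continuous on $[0,R]$, and the fundamental theorem of calculus for convex functions \cite[Theorem 24.2]{RockafellarConvex1997} gives, for all $0<s\le r\le R$,
\[
\tilde w(r)-\tilde w(s)=\int_s^r \tilde w'_-(t)\d t=\int_s^r G(t)\d t=w(r)-w(s),
\]
so $\tilde w-w$ is constant on $(0,R]$, say $\tilde w-w\equiv c$; letting $s\to 0^+$ and using continuity at $0$ extends this to $\tilde w(0)-w(0)=c$, so $\tilde w\equiv w+c$. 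I expect the uniqueness step to be the only genuinely delicate point: one must ensure that the integral identity for convex functions remains valid up to the endpoints of $[0,R]$ — hence the use of continuity of finite convex functions on the closed interval — and keep careful track of the hypothesis that $G$ is left-continuous at $R$, which is exactly what makes $w'_-$ match $G$ there. Everything else reduces to elementary manipulations of the monotone function $G$.
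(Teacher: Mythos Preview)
The paper does not supply a proof of this statement; it simply records it as a consequence of \cite[Theorem 24.1 and Theorem 24.2]{RockafellarConvex1997}. Your argument spells out exactly what that citation is meant to deliver, so the approach is the same as the paper's.

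There is, however, a genuine gap in your uniqueness step. Your claim that a finite convex function on the closed interval $[0,R]$ is automatically continuous at the endpoint $0$ is false. For example, $f(0)=1$ and $f(r)=0$ for $r\in(0,1]$ is convex on $[0,1]$ (the convexity inequality is readily checked) with $f'_-\equiv 0$ on $(0,1]$, yet $f$ differs from $w\equiv 0$ by a non-constant function. Convexity only forces the one-sided inequality $\tilde w(0)\ge\lim_{r\to 0^+}\tilde w(r)$, not equality. Thus your passage ``letting $s\to 0^+$ and using continuity at $0$'' does not go through as written, and indeed the uniqueness assertion, read literally for arbitrary convex $\tilde w\colon[0,R]\to\R$, fails without an extra hypothesis.

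The fix is minor: either assume $\tilde w$ is closed (lower semicontinuous) in Rockafellar's sense---which combined with the convexity inequality above gives continuity at $0$---or state uniqueness only on $(0,R]$. In the paper's applications (Theorems~\ref{thm:mixed_hessian_bounded} and \ref{thm:mixed_hessian_unbounded}) this is harmless, since the profile functions in question are continuous up to the endpoint by the imposed boundary/normalization condition; but you should make the hypothesis explicit rather than assert continuity.
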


Recalling the definition of $p_u$ in \eqref{eq:def_p_u}, Lemma~\ref{le:ma_d_r_n} shows that if in \eqref{eq:mixed_problem} one of the functions $u_i$ with $i\in\{k+1,\ldots,n\}$ is such that $p_{u_i}(R)=\infty$, then $\MA(u[k],u_{k+1},\ldots,u_n;D_R^n)$ is finite if and only if one of the other functions in the mixed Monge--Amp\`ere, say $v\in\{u,u_{k+1},\ldots,u_n\}\setminus \{ u_i\}$, satisfies $p_{v}\equiv 0$. But this means that 
\[
\MA(u[k],u_{k+1},\ldots,u_n;\cdot)\equiv 0,
\]
so only one choice for $\mu$ remains in \eqref{eq:mixed_problem}.
In case $v=u$, then $p_u\equiv 0$ implies that $u$ is constant, which together with the boundary condition means that $u\equiv 0$. In the remaining case, $v$ is one of the other functions $u_i$ and the mixed Monge--Amp\`ere measure vanishes regardless of the choice of $u$. In particular, we lose uniqueness in this case.

Similarly, it is now easy to see that the problem at hand, as posed in \eqref{eq:mixed_problem}, will not have a unique solution if $p_{u_i}(r_o)=0$ for some $i\in\{k+1,\ldots,n\}$ and some $0<r_o\leq R$, since then $p_{u_i}(r)=0$ for every $0<r\leq r_o$ and, in particular, the mixed Monge--Amp\`ere measure vanishes on $D_{r_o}^n$.

\medskip

Considering the above, in our following result, it is reasonable to assume that the functions $p_{u_{k+1}},\ldots,p_{u_n}$ are positive and finite on $(0,R]$. Note that by \eqref{eq:phi_k_ma} the case $u_{k+1}=\cdots=u_n=q$ then yields Theorem~\ref{thm:Hess_intro}, since $p_q(r)=r$ for $r>0$ (cf.\ \eqref{eq:Hess_radial} and Lemma~\ref{le:ma_k_phi_k}). Furthermore, for $k=n$ we consider $F_\mu(r)=\mu(D_r^n)$ in \eqref{eq:mu_d_t_n_condition}, which is trivially non-decreasing as $\mu$ is a measure. This is in line with the classical solution to the Dirichlet problem for the Monge--Amp\`ere measure (see, for example, \cite[Theorem 2.13]{FigalliMongeAmpereequation2017}).

\begin{theorem}
\label{thm:mixed_hessian_bounded}
Let $\mu$ be a finite and rotationally invariant Borel measure on $D_R^n$ for some $R>0$ and let $k\in\{1,\ldots,n\}$. If $u_{k+1},\ldots,u_n\in\Conv(D_R^n)$ are radially symmetric such that
    \[
    0<p_{u_{k+1}},\ldots,p_{u_n}<\infty\qquad \text{on } (0,R],
    \]
    then there exists a unique radially symmetric $u\in\Conv(D_R^n)$, solving the problem
	\begin{equation}
		\label{eq:dirichlet_problem_mixed}
		\begin{cases}
        \MA(u[k],u_{k+1},\ldots,u_n;\cdot)=\mu &\text{ in } D_R^n, \\
			u\equiv 0  &\text{ on } \bd(D_R^n),
		\end{cases}
	\end{equation}
	if and only if
	\begin{equation}
		\label{eq:mu_d_t_n_condition}
		r\mapsto F_\mu(r)= \frac{\mu(D_r^n)}{p_{u_{k+1}}(r)\cdots p_{u_n}(r)},\quad r\in(0,R],
	\end{equation}
	is non-decreasing. In this case, $u$ is given by
    \begin{equation}
    \label{eq:sol_u_dirichlet}
    u(x) = -\int_{|x|}^R \left( \tfrac{1}{\kappa_n} F_\mu(t)\right)^{\frac 1k} \d t
    \end{equation}
    for $x\in\Rn$.
\end{theorem}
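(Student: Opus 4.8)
The plan is to reduce everything to the one-dimensional statement via the profile function and Lemma~\ref{le:ma_d_r_n}, then invoke Corollary~\ref{cor:diks} to promote the radial identity to an identity of measures. First I would observe that, by Lemma~\ref{le:ma_d_r_n} applied with $u_1=\cdots=u_k=u$ and the given $u_{k+1},\ldots,u_n$, a radially symmetric $u\in\Conv(D_R^n)$ solves \eqref{eq:dirichlet_problem_mixed} if and only if $\MA(u[k],u_{k+1},\ldots,u_n;D_r^n)=\mu(D_r^n)$ for all $r\in(0,R]$ (this suffices by Corollary~\ref{cor:diks}, since $\MA$ of radial functions is rotationally invariant by \eqref{eq:mixe_MA_On}), i.e.\ if and only if
\[
\kappa_n\, p_u(r)^k\, p_{u_{k+1}}(r)\cdots p_{u_n}(r) = \mu(D_r^n),\qquad r\in(0,R],
\]
which, using the positivity and finiteness hypothesis on the $p_{u_i}$, is equivalent to $p_u(r)^k = \tfrac{1}{\kappa_n}F_\mu(r)$, that is $p_u(r)=\left(\tfrac{1}{\kappa_n}F_\mu(r)\right)^{1/k}$. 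So the problem becomes: find a radially symmetric convex $u$ on $D_R^n$, vanishing on the boundary, whose left-derivative-of-profile function $p_u$ equals the prescribed function $G(r):=\left(\tfrac{1}{\kappa_n}F_\mu(r)\right)^{1/k}$.

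For the \emph{if} direction, assume $F_\mu$ is non-decreasing. Then $G$ is non-decreasing, and it is left-continuous: $F_\mu$ is a ratio whose numerator $r\mapsto\mu(D_r^n)$ is left-continuous (continuity from below of a measure along the nested open balls $D_s^n\uparrow D_r^n$) and whose denominator is continuous from the left by \eqref{eq:p_u_left_derviative}; $G$ is also finite-valued since $\mu$ is finite and the $p_{u_i}$ are positive. By Theorem~\ref{thm:rockafellar_integegrate}, $\bar{u}(r):=\int_0^r G(t)\d t - \int_0^R G(t)\d t = -\int_r^R G(t)\d t$ is a well-defined finite convex function on $[0,R]$ with $\bar{u}'_-\equiv G$ on $(0,R]$ and $\bar{u}(R)=0$; setting $u(x)=\bar{u}(|x|)$ gives a radially symmetric $u\in\Conv(D_R^n)$ (convexity of $x\mapsto\bar u(|x|)$ for $\bar u$ convex and non-decreasing is standard) with $u\equiv 0$ on $\bd(D_R^n)$, and by \eqref{eq:p_u_left_derviative}, $p_u\equiv\bar{u}'_-\equiv G$. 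Running the reduction backwards via Lemma~\ref{le:ma_d_r_n} and Corollary~\ref{cor:diks} shows this $u$ solves \eqref{eq:dirichlet_problem_mixed}; this also exhibits the formula \eqref{eq:sol_u_dirichlet}. For the \emph{only if} direction, given any radially symmetric solution $u$, the reduction forces $p_u(r)=G(r)$ for all $r\in(0,R]$; but $p_u$ is non-decreasing by \eqref{eq:p_u_left_derviative}, hence so is $G$, hence so is $F_\mu=\kappa_n G^k$ (the $k$-th power is monotone on $[0,\infty)$). Finally, uniqueness: any two radially symmetric solutions $u,\tilde u$ have identical profiles up to an additive constant by the uniqueness clause of Theorem~\ref{thm:rockafellar_integegrate} (both profiles have left-derivative $G$), and the common boundary value $\bar u(R)=\tilde{\bar u}(R)=0$ pins down the constant, so $u=\tilde u$.

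The main obstacle I anticipate is purely bookkeeping at the boundary point $r=R$ and the origin: one must check that $p_u(R)<\infty$ (guaranteed since $\mu$ is finite and the denominators in $F_\mu$ are bounded below on $(0,R]$, so $G(R)<\infty$), that the left-continuity of $G$ at interior points really does transfer from the measure and from \eqref{eq:p_u_left_derviative}, and that the convention $0\cdot\infty=0$ in Lemma~\ref{le:ma_d_r_n} never interferes here because all the $p_{u_i}$ are strictly positive and finite by hypothesis and $p_u=G<\infty$. A secondary subtlety is confirming that matching $\MA(u[k],u_{k+1},\ldots,u_n;\cdot)$ and $\mu$ on all centered open balls is enough — this is exactly Corollary~\ref{cor:diks}, once one notes both measures are rotationally invariant Radon measures (finiteness of $\mu$ and of the mixed Monge--Amp\`ere measure on $D_R^n$, the latter from Lemma~\ref{le:ma_d_r_n} with finite $p$'s). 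None of these present a real difficulty; the argument is essentially a clean translation of the PDE into the elementary calculus fact recorded in Theorem~\ref{thm:rockafellar_integegrate}.
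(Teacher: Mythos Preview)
Your proposal is correct and follows essentially the same route as the paper: reduce via Lemma~\ref{le:ma_d_r_n} and \eqref{eq:p_u_left_derviative} to the one-dimensional equation $p_u=(\tfrac{1}{\kappa_n}F_\mu)^{1/k}$, construct the profile by Theorem~\ref{thm:rockafellar_integegrate}, and upgrade equality on balls to equality of measures via Corollary~\ref{cor:diks}. The only cosmetic difference is that you state the full reduction up front whereas the paper runs the two implications separately; one small wording slip is ``bounded below on $(0,R]$'' for the denominator (the $p_{u_i}$ may tend to $0$ as $r\to 0^+$), but you only need positivity at each fixed $r$, which is given.
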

\begin{proof}
Assume first that there exists a radially symmetric convex function $u\in \Conv(D_R^n)$ that solves \eqref{eq:dirichlet_problem_mixed}. Denoting by $\bar{u}$ its profile function, it follows from Lemma~\ref{le:ma_d_r_n} and \eqref{eq:p_u_left_derviative} that
	\[
	r\mapsto F_\mu(r)=\frac{\MA(u[k],u_{k+1},\ldots,u_n;D_r^n)}{p_{u_{k+1}}(r)\cdots p_{u_n}(r)}=\kappa_n p_u(r)^k = \kappa_n(\bar{u}'_-(r))^k, \quad r\in (0,R],
	\]
	must be non-decreasing.

    Now let $\mu$ be such that $F_\mu$ is non-decreasing. Since $\mu$ is a finite (positive) measure, the function $F_\mu$ is finite-valued and non-negative. Futhermore, as $D_r^n$ is open, the map $r\mapsto \mu(D_r^n)$ is left-continuous, and thus, by \eqref{eq:p_u_left_derviative}, also $F_\mu$ is left-continuous. Since also $(F_\mu/\kappa_n)^{\frac 1k}$ has these properties, it follows from Theorem~\ref{thm:rockafellar_integegrate} that
    \[
    \bar{u}(r)=-\int_r^R \left(\tfrac{1}{\kappa_n}F_\mu(t)\right)^{\frac 1k} \d t=\int_0^r \left(\tfrac{1}{\kappa_n}F_\mu(t)\right)^{\frac 1k} \d t - \int_0^R \left(\tfrac{1}{\kappa_n}F_\mu(t)\right)^{\frac 1k} \d t,\quad r\in [0,R],
    \]
    is a finite-valued convex function on $[0,R]$ such that $\bar{u}'_-\equiv (F_\mu/\kappa_n)^{\frac 1k}$ on $(0,R]$. Furthermore, since $F_\mu$ is non-negative, $\bar{u}'_-$ is also non-negative, and thus, $\bar{u}$ is the profile function of a radially symmetric convex function $u\in\Conv(D_R^n)$. Clearly, $u$ is given by \eqref{eq:sol_u_dirichlet} and continuously extends to $\bd(D_R^n)$ with value $0$. It follows from Lemma~\ref{le:ma_d_r_n} and \eqref{eq:p_u_left_derviative} that
    \[
    \MA(u[k],u_{k+1},\ldots,u_n;D_r^n) = \kappa_n(\bar{u}'_-(r))^k p_{u_{k+1}}(r)\cdots p_{u_n}(r) = F_\mu(r)p_{u_{k+1}}(r)\cdots p_{u_n}(r) = \mu(D_r^n)
    \]
    for every $r\in (0,R]$. Since due to \eqref{eq:mixe_MA_On} the mixed Monge--Amp\`ere measure in the above inequality is rotationally invariant, it now follows from Corollary~\ref{cor:diks} (after trivially extending the measures to $\Rn$ by setting them equal to $0$ on $\Rn\setminus D_R^n$) that $\MA(u[k],u_{k+1},\ldots,u_n;\cdot)\equiv \mu$, and thus, $u$ solves \eqref{eq:dirichlet_problem_mixed}.

    Lastly, suppose that also $\tilde{v}\colon D_R^n\to\R$ is a radially symmetric convex solution to \eqref{eq:dirichlet_problem_mixed}. Then by the first part of the proof, the profile functions of $v$ and $\tilde{v}$ must have the same left derivatives, which by Theorem~\ref{thm:rockafellar_integegrate} implies that $\tilde{v}\equiv v+c$ for some $c\in\R$. Since $v\equiv 0 \equiv \tilde{v}$ on $\bd(D_R^n)$, we must have $c=0$, and thus, $\tilde{v}\equiv v$. 
\end{proof}
\noindent
Let us remark that Theorem~\ref{thm:mixed_hessian_bounded} also holds for $n=1$ and even measures. Beyond that, it is easy to check that, using again Theorem~\ref{thm:rockafellar_integegrate}, one can obtain an explicit convex solution also without assuming that $\mu$ is even.

\medskip

We close this section with the corresponding result for entire equations, which we shall need later in the proof of Theorem~\ref{thm:main_CM_revol}. We omit the proof, since it is almost identical to that of Theorem~\ref{thm:mixed_hessian_bounded}, only noting that $\mu$ must be a Radon measure so that the solution is finite-valued.

\begin{theorem}
\label{thm:mixed_hessian_unbounded}
	Let $\mu$ be a rotationally invariant Radon measure on $\Rn$ and let $k\in\{1,\ldots,n\}$. If $u_{k+1},\ldots,u_n\in\Conv(\Rn)$ are radially symmetric such that
    \[
    0<p_{u_{k+1}},\ldots,p_{u_n}<\infty\qquad \text{on } (0,\infty),
    \]
    then there exists a unique radially symmetric $u\in\Conv(\Rn)$, solving the problem
	\begin{equation}
		\label{eq:unbounded_problem_mixed}
		\begin{cases}
        \MA(u[k],u_{k+1},\ldots,u_n;\cdot)=\mu &\text{ in } \Rn, \\
			u(o)=0,
		\end{cases}
	\end{equation}
	if and only if
	\begin{equation*}
		r\mapsto F_\mu(r)= \frac{\mu(D_r^n)}{p_{u_{k+1}}(r)\cdots p_{u_n}(r)},\quad r>0,
	\end{equation*}
	is non-decreasing. In this case, $u$ is given by
    \begin{equation*}
    u(x) = \int_{0}^{|x|} \left( \tfrac{1}{\kappa_n}F_\mu(t)\right)^{\frac 1k} \d t
    \end{equation*}
    for $x\in\Rn$.
\end{theorem}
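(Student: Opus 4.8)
The plan is to reproduce the proof of Theorem~\ref{thm:mixed_hessian_bounded} with $R=\infty$, replacing the boundary condition $u\equiv 0$ on $\bd(D_R^n)$ by the normalization $u(o)=0$ and the representation $-\int_{|x|}^R$ by $\int_0^{|x|}$. For necessity, I would assume a radially symmetric $u\in\Conv(\Rn)$ solves \eqref{eq:unbounded_problem_mixed}; denoting by $\bar u$ its profile function, Lemma~\ref{le:ma_d_r_n} together with \eqref{eq:p_u_left_derviative} gives
\[
F_\mu(r)=\frac{\MA(u[k],u_{k+1},\ldots,u_n;D_r^n)}{p_{u_{k+1}}(r)\cdots p_{u_n}(r)}=\kappa_n p_u(r)^k=\kappa_n(\bar u'_-(r))^k,\quad r>0,
\]
which is non-decreasing because the left derivative $\bar u'_-$ of the convex function $\bar u$ is non-negative and non-decreasing.

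For sufficiency, suppose $F_\mu$ is non-decreasing. Since $\mu$ is Radon, $\mu(D_r^n)<\infty$ for every $r>0$, so (as $p_{u_{k+1}},\ldots,p_{u_n}$ are positive and finite on $(0,\infty)$) the function $F_\mu$ is finite-valued on $(0,\infty)$; moreover $r\mapsto\mu(D_r^n)$ is left-continuous because $D_r^n$ is open and increasing in $r$, so by \eqref{eq:p_u_left_derviative} both $F_\mu$ and $(F_\mu/\kappa_n)^{1/k}$ are left-continuous, non-negative and non-decreasing. Applying Theorem~\ref{thm:rockafellar_integegrate} with $R=\infty$ (so $(0,R]=(0,\infty)$ and $[0,R]=[0,\infty)$), the function $\bar u(r)=\int_0^r(F_\mu(t)/\kappa_n)^{1/k}\d t$ is finite-valued and convex on $[0,\infty)$ with $\bar u'_-\equiv(F_\mu/\kappa_n)^{1/k}$ on $(0,\infty)$, and since $\bar u'_-\geq 0$ it is the profile function of a radially symmetric $u\in\Conv(\Rn)$ with $u(o)=\bar u(0)=0$ and with the claimed formula. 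Then Lemma~\ref{le:ma_d_r_n} and \eqref{eq:p_u_left_derviative} yield
\[
\MA(u[k],u_{k+1},\ldots,u_n;D_r^n)=\kappa_n(\bar u'_-(r))^k\,p_{u_{k+1}}(r)\cdots p_{u_n}(r)=F_\mu(r)\,p_{u_{k+1}}(r)\cdots p_{u_n}(r)=\mu(D_r^n)
\]
for all $r>0$; since the mixed Monge--Amp\`ere measure on the left is rotationally invariant by \eqref{eq:mixe_MA_On}, Corollary~\ref{cor:diks} upgrades this equality on balls to $\MA(u[k],u_{k+1},\ldots,u_n;\cdot)\equiv\mu$ on $\Rn$.

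Uniqueness then follows as in the bounded case: by the necessity argument any radially symmetric convex solution has profile function with left derivative $(F_\mu/\kappa_n)^{1/k}$, so Theorem~\ref{thm:rockafellar_integegrate} forces two such solutions to differ by an additive constant, which is pinned to $0$ by $u(o)=0$. The only genuinely new point compared with Theorem~\ref{thm:mixed_hessian_bounded}, and the one I would be most careful about, is checking that the solution is finite everywhere on $\Rn$: this is precisely where the Radon hypothesis on $\mu$ (rather than mere local finiteness near $o$) is used, ensuring $F_\mu(r)<\infty$, hence $\bar u(r)\leq r\,(F_\mu(r)/\kappa_n)^{1/k}<\infty$, for every $r>0$. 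With this finiteness in hand, all estimates from the bounded case carry over unchanged, since the integrals $\int_0^r$ are over bounded intervals.
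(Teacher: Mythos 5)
Your proposal is correct and follows exactly the route the paper intends: the paper omits the proof of Theorem~\ref{thm:mixed_hessian_unbounded} with the remark that it is almost identical to that of Theorem~\ref{thm:mixed_hessian_bounded}, modulo swapping the boundary condition for the normalization $u(o)=0$, replacing $-\int_{|x|}^R$ by $\int_0^{|x|}$, and observing that the Radon hypothesis on $\mu$ guarantees $F_\mu$ is finite so that the solution is finite-valued. You have reproduced and fleshed out precisely that adaptation, including the one genuinely new point.
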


\section{Christoffel--Minkowski Problems}
\label{se:CM_problems}
With Theorem \ref{thm:mixed_hessian_unbounded} at hand, we now prove Theorem \ref{thm:main_CM_revol}, the main result of this paper. The method is based on suitable analytic representations of convex bodies and corresponding area measures, which we now briefly present.

\subsection{Mixed Measures and Spherical Caps}
\label{se:mm_sc}
The following, we adopt the approach presented in \cite[Section 3]{KnoerrUlivelli2023}.
To every convex body $K\in\KN$, we associate the
functions
\[
\lfloor K\rfloor(x)=\begin{cases}
    \min_{(x,t)\in K} t\quad &\text{if } x\in\proj_{\Rn} K,\\
    \infty\quad &\text{else},
\end{cases}
\qquad
\lceil K\rceil(x)=\begin{cases}
    -\max_{(x,t)\in K} t\quad &\text{if } x\in\proj_{\Rn} K,\\
    \infty\quad &\text{else},
\end{cases}
\]
which are lower semicontinuous, convex functions with compact domain on $\Rn$, i.e.\ they are elements of $\fconvcd$. Note that $\dom(\lfloor K \rfloor) = \dom(\lceil K \rceil)=\proj_{\Rn} K$ and that the closure of
\[
\{x\in \bd( K): K \text{ has an outer unit normal in } \sN_- \text{ at } x\}
\]
is the graph of $\lfloor K \rfloor$ over its domain. To illustrate that the equality only holds up to taking the closure, we mention the example $K=B_1^{n+1}$. 
A similar statement holds for the boundary points with outer unit normals in $\sN_+$ and the graph of $\lceil K \rceil$. Observe that $K$ can be uniquely recovered from these two functions via
\begin{equation}
\label{eq:retrieve_K_floor_ceil}
K=\{(x,t)\in\Rn\times \R : \lfloor K \rfloor(x)\leq t \leq -\lceil K \rceil(x)\}.
\end{equation}
Conversely, every two lower semicontinuous, convex functions $w^-,w^+$ that have the same compact domain on $\Rn$ and such that $\max_{x\in\dom(w^-)} w^-(x) \leq - \max_{x\in\dom(w^+)} w^+(x)$ define a unique convex body $K\in\KN$ so that $w^-=\lfloor K \rfloor$ and $w^+=\lceil K \rceil$.

The functions $\lfloor K \rfloor$, $\lceil K \rceil$ are intimately related to the support function of $K$. Denoting $u_K^-(x)=h_K(x,-1)$ for $x\in\Rn$ and, similarly, $u_K^+(x)=h_K(x,1)$, we have
\begin{equation}
\label{eq:floor_conjugate}
(\lfloor K \rfloor)^*(x)=u_K^-(x)\quad \text{and}\quad (\lceil K \rceil)^*(x)=u_K^+(x)
\end{equation}
for $x\in\Rn$ (cf.\ \cite[Equation (9)]{KnoerrUlivelli2023}). We will now relate the mixed area measure of $K_1,\ldots,K_n\in\KN$ to the mixed Monge--Amp\`ere measures of the associated functions $u_{K_1}^{\pm},\ldots,u_{K_n}^{\pm}$ which are elements of $\Conv(\Rn)$. To do so, let $z=(z_1,\ldots,z_{n+1})\in\sN_-$ and denote by
\[
\gnom(z)=\frac{(z_1,\ldots,z_n)}{|z_{n+1}|}\in\Rn
\]
its gnomonic projection, which defines a homeomorphism $\gnom\colon \sN_-\to\Rn$. Similarly, one defines $\gnom$ on $\sN_+$, which is again a homomorphism. We have the following consequence of \cite[Corollary 4.9]{HugMussnigUlivelli1}.

\begin{lemma}
\label{le:int_uk}
If $\varphi\colon \Rn\to[0,\infty)$ is measurable, then
\[
\int_{\Rn} \varphi(x)\d\MA(u_{K_1}^{\pm},\ldots,u_{K_n}^{\pm};x)=\int_{\sN_{\pm}} |z_{n+1}| \varphi(\gnom(z)) \d S(K_1,\ldots,K_n,z)
\]
for every $K_1,\ldots,K_n\in\KN$.
\end{lemma}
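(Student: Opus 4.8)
The plan is to reduce the asserted identity to the case of a single convex body by a polarization argument, and then to establish that case by combining the description of the Monge--Amp\`ere measure through the convex conjugate with a change of variables along the gnomonic projection.

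For the reduction, observe that $h_{K+L}=h_K+h_L$, so that $u_{K+L}^\pm=u_K^\pm+u_L^\pm$; hence Minkowski combinations of bodies correspond to pointwise sums of the associated functions. Suppose the single-body identity
\[
\int_{\Rn}\varphi(x)\,\d\MA(u_L^\pm;x)=\int_{\sN_\pm}|z_{n+1}|\,\varphi(\gnom(z))\,\d S_n(L,z)
\]
holds for every $L\in\KN$. Applying it with $L=\lambda_1K_1+\dots+\lambda_nK_n$, $\lambda_i\ge0$, and expanding the left-hand side via the defining relation for mixed Monge--Amp\`ere measures and the right-hand side via that for mixed area measures (integration of a fixed non-negative measurable function against a finite non-negative combination of measures being additive), one obtains two homogeneous polynomials of degree $n$ in $(\lambda_1,\dots,\lambda_n)$ that coincide on $[0,\infty)^n$. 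Comparing the coefficients of $\lambda_1\cdots\lambda_n$ and using the symmetry of the mixed measures gives the statement of the lemma, so it remains to prove the single-body identity; I treat the case of $\sN_-$, the case of $\sN_+$ being symmetric.

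For the single-body identity, note first that both sides define finite Borel measures on $\Rn$: the left one because, by Lemma~\ref{le:subdiff_conjugate} and \eqref{eq:floor_conjugate}, the set $\bigcup_{x\in\Rn}\partial u_L^-(x)$ is contained in the bounded set $\dom\big((u_L^-)^*\big)=\dom(\lfloor L\rfloor)=\proj_{\Rn} L$, and the right one because $S_n(L,\cdot)$ is finite and $|z_{n+1}|\le1$. By the uniqueness part of the Riesz representation theorem it therefore suffices to test against $\varphi\in C_c(\Rn)$ and then pass to arbitrary non-negative measurable $\varphi$ by monotone convergence. Assume first that $L$ has interior points. By \eqref{eq:floor_conjugate} we have $(u_L^-)^*=\lfloor L\rfloor$, so \eqref{eq:conjugate_MA} yields
\[
\int_{\Rn}\varphi(x)\,\d\MA(u_L^-;x)=\int_{\proj_{\Rn} L}\varphi\big(\nabla\lfloor L\rfloor(x)\big)\,\d x.
\]
On the other hand, up to an $\hm^n$-null set the part of $\bd(L)$ with outer unit normals in $\sN_-$ is the graph of $\lfloor L\rfloor$ over $\proj_{\Rn} L$, and at a point $x$ of differentiability the outer unit normal at $(x,\lfloor L\rfloor(x))$ equals $\nu(x)=(1+|\nabla\lfloor L\rfloor(x)|^2)^{-1/2}\,(\nabla\lfloor L\rfloor(x),-1)$. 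Thus $\gnom(\nu(x))=\nabla\lfloor L\rfloor(x)$ and $|\nu(x)_{n+1}|=(1+|\nabla\lfloor L\rfloor(x)|^2)^{-1/2}$, so inserting $g(z)=|z_{n+1}|\varphi(\gnom(z))$ into the area formula for the graph gives
\[
\int_{\sN_-}g(z)\,\d S_n(L,z)=\int_{\proj_{\Rn} L}g(\nu(x))\sqrt{1+|\nabla\lfloor L\rfloor(x)|^2}\,\d x=\int_{\proj_{\Rn} L}\varphi\big(\nabla\lfloor L\rfloor(x)\big)\,\d x,
\]
which matches the Monge--Amp\`ere side. The remaining case of a lower-dimensional $L$ follows by applying the identity to $L+\varepsilon B_1^{n+1}$ and letting $\varepsilon\to0^+$, using the weak continuity of $S_n(\cdot,\cdot)$ under Hausdorff convergence together with the fact that $u_{L+\varepsilon B_1^{n+1}}^-\to u_L^-$ locally uniformly, which implies that $\MA(u_{L+\varepsilon B_1^{n+1}}^-;\cdot)$ converges weakly to $\MA(u_L^-;\cdot)$, and observing that $z\mapsto|z_{n+1}|\varphi(\gnom(z))$ extends by $0$ to a continuous function on $\sN$ since $\varphi$ has compact support. (Alternatively, the single-body identity is essentially \cite[Corollary 4.9]{HugMussnigUlivelli1}.)

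The main obstacle is the change of variables: identifying the outer unit normal along the lower boundary graph, checking that its gnomonic projection is exactly $\nabla\lfloor L\rfloor$, and recognizing that the weight $|z_{n+1}|$ on the right-hand side is precisely what cancels the area-formula Jacobian $\sqrt{1+|\nabla\lfloor L\rfloor|^2}$. The polarization step, the extension from compactly supported to general non-negative measurable test functions, and the handling of degenerate bodies are routine.
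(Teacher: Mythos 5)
Your proof is correct, and it takes a genuinely different route from the paper: the paper dispenses with this lemma by simply citing \cite[Corollary~4.9]{HugMussnigUlivelli1}, whereas you give a self-contained derivation. Your strategy --- polarize the single-body identity $\int_{\Rn}\varphi\,\d\MA(u_L^\pm;\cdot)=\int_{\sN_\pm}|z_{n+1}|\varphi(\gnom(z))\,\d S_n(L,z)$ over Minkowski combinations, and then establish that identity via \eqref{eq:conjugate_MA} together with \eqref{eq:floor_conjugate} and the area formula for the graph of $\lfloor L\rfloor$ --- is sound, and the two key computations (that the gnomonic projection of the downward normal $\nu(x)$ returns $\nabla\lfloor L\rfloor(x)$, and that $|\nu(x)_{n+1}|$ is exactly the reciprocal of the area-formula Jacobian) are carried out correctly. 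The approximation of lower-dimensional $L$ by $L+\varepsilon B_1^{n+1}$ and the continuous extension of $z\mapsto|z_{n+1}|\varphi(\gnom(z))$ by zero across $\sN_o$ for $\varphi\in C_c(\Rn)$ are also handled appropriately. The only cosmetic point is the order of quantifiers: the polarization step, as written, is applied before you restrict to $\varphi\in C_c(\Rn)$, so strictly speaking one should first fix $\varphi\in C_c(\Rn)$ (ensuring finiteness of all the polynomials in $\lambda_1,\ldots,\lambda_n$), then polarize, and only afterwards pass to general non-negative measurable $\varphi$ by monotone convergence; but you clearly have this in mind and it does not affect the validity. What the self-contained argument buys is independence from the cited corollary and a transparent view of why the factor $|z_{n+1}|$ must appear: it is exactly the cancellation between the gnomonic weight and the graph Jacobian.
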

Next, let $z\in\sN_{\pm}$ and observe that $|\gnom(z)|=\sqrt{1-|z_{n+1}|^2}/|z_{n+1}|$. Thus,
$|\gnom(z)|<\tan(\alpha)$ if and only if $|z_{n+1}|>\cos(\alpha)$ and, therefore,
\begin{equation}
\label{eq:chi_d_tan}
\chi_{D_{\tan(\alpha)}^n}(\gnom(z))=\chi_{C_{\alpha}^\pm}(z)
\end{equation}
for $z\in \sN_{\pm}$ and $\alpha\in (0,\frac{\pi}{2}]$. Together with Lemma~\ref{le:int_uk} this shows
\[
\int_{C_\alpha^\pm} |z_{n+1}| \d S_j(K,z)=\MA\left(u_K^\pm [j],u_B[n-j];D^n_{\tan(\alpha)}\right)
\]
for every $j\in\{1,\ldots,n\}$, $K\in\KN$, and $\alpha\in(0,\frac{\pi}{2}]$, where
\[
u_B(x)=h_{B_1^{n+1}}(x,\pm 1) = \sqrt{1+|x|^2}
\]
for $x\in\Rn$. If, in addition, $K$ is a body of revolution, then $u_K^\pm$ must be radially symmetric and Lemma~\ref{le:ma_d_r_n} shows that
\begin{equation}
\label{eq:int_moment_cap}
\int_{C_\alpha^\pm} |z_{n+1}| \d S_j(K,z)=\kappa_n p_{u_K^\pm}(\tan(\alpha))^j \left(\frac{\tan(\alpha)}{\sqrt{1+\tan^2(\alpha)}}\right)^{n-j} = \kappa_n p_{u_K^\pm}(\tan(\alpha))^j \sin(\alpha)^{n-j}.
\end{equation}
Since $p_{u_K^{\pm}}$ must be non-decreasing, we conclude that also
\begin{equation}
\label{eq:p_uK_non-trivdec}
\alpha\mapsto \kappa_n p_{u_K^{\pm}}(\tan(\alpha))^j = \frac{\int_{C_\alpha^\pm} |z_{n+1}| \d S_j(K,z)}{\sin(\alpha)^{n-j}},\quad \alpha\in (0,\tfrac{\pi}{2}],
\end{equation}
must be non-decreasing. Note that by Lemma~\ref{le:subdiff_conjugate} together with \eqref{eq:floor_conjugate}, we retrieve the radius of $\proj_{\Rn} K = \dom(\lfloor K \rfloor)$ from $\lim_{r\to\infty} p_{u_{K}^\pm}(r)$. Thus, when $R_K>0$, the map \eqref{eq:p_uK_non-trivdec} also has to be non-trivial.

\subsection{Further Preparations}
Having established the necessity of the conditions on $F_\mu^\pm$ in Theorem~\ref{thm:main_CM_revol}, we need some further results to prove the rest of the statement. We begin with a trivial estimate.

\begin{lemma}
\label{le:meas_ineq}
Let $j\in\{1,\ldots,n\}$. If $u\in\Conv(\Rn)$ is radially symmetric, then
\[
\MA_j(u;\cdot)\leq 2^{\frac{n-j}{2}} \MA(u[j],u_B[n-j];\cdot)
\]
on $\Rn\setminus D_1^n$.
\end{lemma}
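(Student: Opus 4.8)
The plan is to reduce both sides to the one-dimensional quantities provided by Lemma~\ref{le:ma_d_r_n}, using that both $\MA_j(u;\cdot)$ and $\MA(u[j],u_B[n-j];\cdot)$ are rotationally invariant (by \eqref{eq:mixe_MA_On}, since $u$, $|\cdot|$, and $u_B$ are all radially symmetric). By a suitable version of Corollary~\ref{cor:diks} applied to the restriction to $\Rn\setminus D_1^n$ — or more simply, by comparing the two rotationally invariant measures on annuli $D_r^n\setminus D_1^n$ for $r>1$ — it suffices to prove the inequality of the total masses on $D_r^n$ for every $r>1$, after noting that both measures assign the \emph{same} value to $D_1^n$ would not be needed; rather one checks $\MA_j(u;D_r^n\setminus D_1^n)\le 2^{(n-j)/2}\MA(u[j],u_B[n-j];D_r^n\setminus D_1^n)$, which in turn follows once we show the corresponding inequality for the masses of $D_r^n$ \emph{minus} the common task of controlling $D_1^n$; the cleanest route is to establish $\MA_j(u;D_r^n)\le 2^{(n-j)/2}\MA(u[j],u_B[n-j];D_r^n)$ for all $r\ge 1$ and separately note it need not hold for $r<1$, then subtract at $r=1$.

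Concretely, first I would record via Lemma~\ref{le:ma_d_r_n} (with the choices $u_1=\cdots=u_j=u$ and the remaining entries equal to $|\cdot|$, respectively to $u_B$) the identities
\[
\MA_j(u;D_r^n)=\kappa_n\, p_u(r)^j\, p_{|\cdot|}(r)^{n-j},
\qquad
\MA(u[j],u_B[n-j];D_r^n)=\kappa_n\, p_u(r)^j\, p_{u_B}(r)^{n-j},
\]
valid for $0<r$ (with $r\le R$ irrelevant here since $R=\infty$), recalling $p_{|\cdot|}(r)=1$ for all $r>0$ because $\nabla|x|=x/|x|$ has unit length, and $p_{u_B}(r)=r/\sqrt{1+r^2}\cdot$—wait, more precisely $u_B(x)=\sqrt{1+|x|^2}$ has radial derivative $r/\sqrt{1+r^2}$, so $p_{u_B}(r)=r/\sqrt{1+r^2}$. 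Hmm, that is \emph{less} than $1$, which would point the inequality the wrong way; the correct reading is that we want a \emph{lower} bound on $p_{u_B}$. For $r\ge 1$ we have $r/\sqrt{1+r^2}\ge 1/\sqrt{2}$, hence $p_{u_B}(r)^{n-j}\ge 2^{-(n-j)/2}=2^{-(n-j)/2}\cdot p_{|\cdot|}(r)^{n-j}$, i.e. $p_{|\cdot|}(r)^{n-j}\le 2^{(n-j)/2}p_{u_B}(r)^{n-j}$. Multiplying by the common nonnegative factor $\kappa_n p_u(r)^j$ gives exactly $\MA_j(u;D_r^n)\le 2^{(n-j)/2}\MA(u[j],u_B[n-j];D_r^n)$ for every $r\ge 1$.

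To pass from masses on balls to the measure inequality on $\Rn\setminus D_1^n$, I would argue as in the proof of Lemma~\ref{le:rot_inv_meas_determined}: the restrictions of both (rotationally invariant, Radon) measures to the open set $\Rn\setminus D_1^n$ are again rotationally invariant, and the $\sigma$-algebra of rotationally invariant Borel subsets of $\Rn\setminus D_1^n$ is generated by the annuli $D_r^n\setminus D_1^n$, $r>1$; since $\mu_1(D_r^n\setminus D_1^n)\le 2^{(n-j)/2}\mu_2(D_r^n\setminus D_1^n)$ follows from the ball inequality at radii $r$ and $1$ together with $\mu_i$ being a measure, Corollary~\ref{cor:diks} (in its inequality form, applied on the subspace or ambient space after trivial extension) yields the claim on all of $\Rn\setminus D_1^n$. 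The only mild subtlety — and the one point worth stating carefully — is the behaviour at the sphere $\bd D_1^n$ itself: since both measures may have positive mass there, one should phrase the conclusion for the \emph{open} complement $\Rn\setminus D_1^n$, or equivalently note that $p_{u_B}(1)=1/\sqrt2$ already makes the pointwise bound on the densities-in-$r$ an equality at $r=1$ up to the constant, so no boundary issue arises. I do not expect any real obstacle here; the entire content is the elementary inequality $r/\sqrt{1+r^2}\ge 1/\sqrt 2$ for $r\ge 1$ combined with the bookkeeping of Lemma~\ref{le:ma_d_r_n}.
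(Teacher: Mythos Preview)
Your proposal is correct and follows essentially the same route as the paper: compute both measures on balls via Lemma~\ref{le:ma_d_r_n}, use the elementary bound $p_{u_B}(r)=r/\sqrt{1+r^2}\ge 1/\sqrt{2}$ for $r\ge 1$ (with equality at $r=1$, which is what makes the subtraction on annuli legitimate), and then invoke Corollary~\ref{cor:diks} on the rotationally invariant restrictions to $\Rn\setminus D_1^n$. The paper's write-up is just a more compact version of the same computation, combining your ball inequality and the equality at $r=1$ into a single displayed chain.
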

\begin{proof}
Let $u\in\Conv(\Rn)$ be radially symmetric. For every $1\leq s < t$ it follows from Lemma~\ref{le:ma_d_r_n} that
\begin{align*}
\MA_j(u;D_t^n\setminus D_s^n) &= \MA_j(u;D_t^n) - \MA_j(u;D_s^n)\\
&= \frac{\MA(u[j],u_B[n-j];D_t^n)}{\left(\frac{t}{\sqrt{1+t^2}}\right)^{n-j}} - \frac{\MA(u[j],u_B[n-j];D_s^n)}{\left(\frac{s}{\sqrt{1+s^2}}\right)^{n-j}}\\
&\leq 2^{\frac{n-j}{2}} \MA(u[j],u_B[n-j];D_t^n\setminus D_s^n).
\end{align*}
The statement now follows from Corollary~\ref{cor:diks}, applied to the rotationally invariant measures $\chi_{\Rn\setminus D_1^n}(x) \d \MA_j(u;x)$ and $2^{\frac{n-j}{2}}\chi_{\Rn\setminus D_1^n}(x) \d \MA(u[j],u_B[n-j];x)$.
\end{proof}

Next, for $K \in \KN$ and $z \in \sN$ we write
\[
F(K,z)=\{x \in K: \langle x,z \rangle=h_K(z) \}.
\]
for the support set of $K$ with outer unit normal $z$. We remark that
\begin{equation}
\label{eq:support_set_add}
F(K+L,z)=F(K,z)+F(L,z)
\end{equation}
for $K,L\in\KN$ and $z\in\sN$, and, when $K$ is $(n+1)$-dimensional,
\begin{equation}
\label{eq:s_n_support_set}
S_n(K,\omega)=\hm^{n}\left(\bigcup_{z\in\omega} F(K,z)\right)
\end{equation}
for $\omega\subseteq \sN$ Borel. See, for example, \cite[Theorem 1.7.5 (c)]{SchneiderConvexBodiesBrunn2013} and \cite[Theorem 4.2.3]{SchneiderConvexBodiesBrunn2013}, respectively. We use these facts to treat a possible vertical segment contained in the boundary of a body of revolution (cf.\ \cite[Lemma 3.7]{BraunerHofstaetterOrtegaMoreno_mixedCM}).

\begin{lemma}
\label{le:vertical_segments}
If $K\in\KN$ is a body of revolution, then
\begin{equation}
\label{eq:theta_F}
\vartheta F(K,z)= F(K,\vartheta z)
\end{equation}
for every $\vartheta \in \SO(n)$ and $z \in \sN_o$, and, in particular, $F(K,z)$ is a line segment parallel to\linebreak$\ospan\{e_{n+1}\}$. In addition,
\[
\vol_1(F(K,z))\, R_K^{j-1}=\frac{S_j(K,\sN_o)}{j \kappa_n },
\]
for every $z\in\sN_o$ and $j\in\{1,\ldots,n\}$.
\end{lemma}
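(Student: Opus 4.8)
The first claim is essentially a symmetry argument. Since $K$ is a body of revolution and $\vartheta\in\SO(n)$ fixes $e_{n+1}$, one has $\langle \vartheta x,\vartheta z\rangle=\langle x,z\rangle$ and $h_{\vartheta K}=h_K\circ\vartheta^{-1}$, so $h_K(\vartheta z)=h_{\vartheta K}(\vartheta z)=h_K(z)$ because $\vartheta K=K$; combining these gives $x\in F(K,z)\iff \vartheta x\in F(K,\vartheta z)$, which is \eqref{eq:theta_F}. To deduce that $F(K,z)$ is a vertical segment for $z\in\sN_o$: fix such a $z$ and note that $\vartheta z=z$ for every $\vartheta\in\SO(n-1)$ (the rotations fixing both $z$ and $e_{n+1}$), so $F(K,z)=\vartheta F(K,z)$ for all such $\vartheta$. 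A convex set invariant under the rotations of $e_{n+1}^\perp\cap z^\perp$ (an $(n-1)$-dimensional subspace of $e_{n+1}^\perp$) that also lies in a halfspace $\{\langle\cdot,z\rangle=h_K(z)\}$ must be contained in $\ospan\{z\}^{\perp'}$ shifted appropriately, i.e.\ it can only extend in the $e_{n+1}$-direction; hence $F(K,z)$ is a (possibly degenerate) line segment parallel to $\ospan\{e_{n+1}\}$.

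For the quantitative identity, the idea is to isolate the contribution of $\sN_o$ to $S_j(K,\cdot)$ via \eqref{eq:sj_mixed_relation} and the dilation/additivity properties of support sets. Write $j$ copies of $K$ and $n-j$ copies of $B=B_1^{n+1}$ in the mixed area measure and evaluate on $\omega=\sN_o$. The plan is to use the polarization identity: $S_j(K,\cdot)$ is the coefficient of $r^{n-j}$ (up to $\binom{n}{j}$) in $S_n(K+rB,\cdot)$, and by \eqref{eq:s_n_support_set} together with \eqref{eq:support_set_add},
\[
S_n(K+rB,\sN_o)=\hm^n\Big(\bigcup_{z\in\sN_o} \big(F(K,z)+F(rB,z)\big)\Big).
\]
For $z\in\sN_o$, $F(K,z)$ is a vertical segment of length $\vol_1(F(K,z))$ (independent of $z$ by the rotational symmetry just established and the fact that $\SO(n)$ acts transitively on $\sN_o$) and $F(rB,z)=\{rz\}$ is a single point. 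As $z$ ranges over $\sN_o$, the union of the sets $rz+F(K,z)$ sweeps out a lateral cylinder: its base is the sphere $r\,\sN_o\subset e_{n+1}^\perp$ of radius $r$ and its height is $\vol_1(F(K,z))=:\ell$. So this piece of $S_n(K+rB,\sN_o)$ equals $\hm^n$ of a cylinder $r\,\sn\times[\text{segment of length }\ell]$, which is $\ell\cdot r^{n-1}\,\hm^{n-1}(\sn)=\ell\cdot r^{n-1}\cdot n\kappa_n$. But we want $R_K$, not $r$, to appear — and here the subtlety is that $K$ itself contributes: the correct bookkeeping replaces $r$ by $R_K$ in the base circle once one uses $K+rB$ and picks off the right power. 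More carefully, $\proj_{e_{n+1}^\perp}(K+rB)$ is the ball of radius $R_K+r$, and the normals in $\sN_o$ see the lateral surface of a cylinder of base-radius $R_K+r$ and height $\ell+$ (height contribution from $rB$ along $\sN_o$, which is $0$ since $F(rB,z)$ is a point). Thus the lateral-surface part is $\ell\,(R_K+r)^{n-1} n\kappa_n$; expanding $(R_K+r)^{n-1}=\sum_{i}\binom{n-1}{i}R_K^{i}r^{n-1-i}$ and matching the coefficient of $r^{n-j}$ against $\binom{n}{j}S_j(K,\sN_o)$ gives, after using $\binom{n}{j}=\frac{n}{j}\binom{n-1}{j-1}$,
\[
S_j(K,\sN_o)=\frac{n\kappa_n\,\ell\,\binom{n-1}{j-1}}{\binom{n}{j}}R_K^{j-1}=j\kappa_n\,\ell\,R_K^{j-1},
\]
which rearranges to the asserted formula with $\ell=\vol_1(F(K,z))$.

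The main obstacle I expect is making the ``sweeping out a cylinder'' step rigorous: one must justify that the sets $F(K,z)+F(rB,z)$, $z\in\sN_o$, are pairwise essentially disjoint and that their union is $\hm^n$-equivalent to the lateral boundary of the cylinder over $\proj_{e_{n+1}^\perp}(K+rB)$. This uses \eqref{eq:s_n_support_set} (which already accounts for overlaps) and the fact that distinct $z\in\sN_o$ give support sets at distinct boundary points. A clean alternative that sidesteps the combinatorics of the $(R_K+r)^{n-1}$ expansion is to compute $\bar S_j$ or to apply Theorem~\ref{thm:bar_sk_rewrite} directly, reducing to the $(j+1)$-dimensional surface area measure of the projection $\proj_{\vartheta\bar E_{j+1}}K$, whose support set over the ``equatorial'' normals is a vertical segment of the same length $\ell$ times the base circle of radius $R_K$ in $j$-dimensional space; this yields $S_j'=\ell\cdot j\kappa_j R_K^{j-1}/\kappa_j\cdot(\dots)$ and, after integrating over $\SO(n)$ and using the normalization constants, the stated identity. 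Either route requires only that $\vol_1(F(K,z))$ is the same for all $z\in\sN_o$, which follows from \eqref{eq:theta_F} and transitivity of $\SO(n)$ on $\sN_o$.
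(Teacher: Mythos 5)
Your computation for the quantitative identity is correct and takes a genuinely different route from the paper. You expand
\[
S_n(K+rB^{n+1}_1,\sN_o)=\hm^n\Big(\bigcup_{z\in\sN_o}\big(F(K,z)+F(rB^{n+1}_1,z)\big)\Big)=n\kappa_n\,\ell\,(R_K+r)^{n-1}
\]
directly and match the coefficient of $r^{n-j}$ in the Steiner expansion, which immediately gives $\binom{n}{j}S_j(K,\sN_o)=n\kappa_n\ell\binom{n-1}{j-1}R_K^{j-1}$, i.e.\ $S_j(K,\sN_o)=j\kappa_n\ell R_K^{j-1}$. This is a self-contained argument. The paper instead replaces $K$ by a reference cylinder $\Gamma_K=B^n_{R_K}+[o,\ell\,e_{n+1}]+c\,e_{n+1}$, uses that $F(K+B^{n+1}_\varepsilon,z)=F(\Gamma_K+(B^{n+1}_\varepsilon\cap e_{n+1}^\perp),z)$ for $z\in\sN_o$ to show $S_j(K,\sN_o)=\bar{S}_j(\Gamma_K,\sN_o)$, and then evaluates via the Kubota-type formula (Theorem~\ref{thm:bar_sk_rewrite}). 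Your approach trades that machinery for the binomial bookkeeping plus the sweep argument; the ``sweeping out a cylinder'' step you flag as the main obstacle is indeed the point where care is needed, but \eqref{eq:s_n_support_set} together with the observation that distinct $z\in\sN_o$ give support sets over distinct points $(R_K+r)z$ handles it. You also mention the $\bar{S}_j$ route as an alternative, which is close to what the paper does, though the paper's trick of passing through $\Gamma_K$ (rather than applying Theorem~\ref{thm:bar_sk_rewrite} to $K$ itself) is what makes the computation immediate.

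There is, however, a genuine gap in your argument that $F(K,z)$ is a vertical segment. You argue from invariance of $F(K,z)$ under the stabilizer $\SO(n-1)$ of $\{z,e_{n+1}\}$ that the set ``can only extend in the $e_{n+1}$-direction.'' This does not follow: a disk centred at $R_K z$ inside the affine hyperplane $\{\langle\cdot,z\rangle=h_K(z)\}$ and lying in the directions of $z^\perp\cap e_{n+1}^\perp$ is also convex and $\SO(n-1)$-invariant, yet not a vertical segment; and when $n=2$ the stabilizer is trivial, so the symmetry argument yields no constraint at all. The correct reason (used in the paper) is the projection constraint: since $\proj_{\Rn}K=B^n_{R_K}$ and $\langle x,z\rangle=h_K(z)=R_K$ for every $x\in F(K,z)$ with $z\in\sN_o$, one gets $\proj_{\Rn}x=R_K z$ for all such $x$, so $F(K,z)$ lies on the single vertical line $R_K z+\ospan\{e_{n+1}\}$ and, being compact and convex, is a line segment there. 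With this fix the rest of your argument goes through.
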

\begin{proof}
Equation \eqref{eq:theta_F} trivially follows from the definition of $F(K,z)$ together with the assumption that $K$ is a body of revolution. For $z \in \s^n_0$, consider now two (not necessarily distinct) points $x,y \in F(K,z)$. By convexity, we have
\[
[x,y]=\{(1-t)x+ty : t \in [0,1] \} \subseteq F(K,z).
\]
As $\proj_{\Rn} K=B_{R_K}^n$ and since the image of $F(K,z)$ under the same projection is a subset of $\bd(B_{R_K}^n)$, this is only possible if $[x,y]$ lies on a line parallel to $\ospan\{e_{n+1}\}$. This shows that the compact, convex set $F(K,z)$ is of dimension $1$, which means it has to be a line segment. The length of this segment will be denoted by $\ell=\vol_{1}(F(K,z))$, which is independent of $z\in\sN_o$.

Next, observe that
\begin{align*}
\bigcup_{z\in\sN_o} F(K,z) &=\{x\in \bd(K) : K \text{ has an outer unit normal in } \sN_o \text{ at } x\}\\
&= \{x\in \bd(\Gamma_K) : \Gamma_K \text{ has an outer unit normal in } \sN_o \text{ at } x\}\\
&=\bigcup_{z\in\sN_o} F(\Gamma_K,z), 
\end{align*}
where $\Gamma_K$ is a right cylinder of the form 
\[
\Gamma_K = B^{n}_{R_K}+[o,\ell\, e_{n+1}]+c\, e_{n+1}
\]
for some $c\in\R$.
It now follows from \eqref{eq:support_set_add} that 
\[
F(K+ B_\varepsilon^{n+1},z)=F(\Gamma_K + B_{\varepsilon}^{n+1},z)=F(\Gamma_K+(B_{\varepsilon}^{n+1}\cap e_{n+1}^\perp),z)
\]
for every $z \in \sN_o$ and $\varepsilon>0$. Together with \eqref{eq:s_n_support_set} and assuming that $K$ is $(n+1)$-dimensional, this shows
\[
S_n(K+B_{\varepsilon}^{n+1}, \sN_o)=S_n(\Gamma_K+(B_{\varepsilon}^{n+1}\cap e_{n+1}^\perp), \sN_o),
\]
and thus, by polynomial expansion,
\[
S_j(K, \sN_o)=\bar{S}_j(\Gamma_K,\sN_o)
\]
for every $j \in \{0,1,\dots,n \}$, where we recall the definition of $\bar{S}_j$ in \eqref{eq:bar_s_j}. The proof is now concluded by Theorem~\ref{thm:bar_sk_rewrite}, which shows that
\[
\bar{S}_j(\Gamma_K,\sN_o)=\frac{\kappa_n}{\kappa_j} S_j(B_{R_K}^j+[o,\ell\, e_{n+1}],\s^j_o) = \frac{\kappa_n}{\kappa_j} j \kappa_j R_K^{j-1} \ell,
\]
where we have taken into account that $\Gamma_K$ is invariant under $\SO(n)$. Lastly, in case $K$ is of lower dimension, it follows from rotational symmetry that either $K$ is a disk in $e_{n+1}^\perp$ or a line segment in $\ospan\{e_{n+1}\}$. In case of the former, we have $\vol_1(F(K,z))=0$ for $z\in\sN_o$ but also $S_j(K,\sN_o)=0$, which is straightforward to check. In case of the line segment, we have $R_K=0$ and, similarly, $S_j(K,\sN_o)=0$.
\end{proof}

\subsection{Proof of Theorem~\ref{thm:main_CM_revol}}
In case there exists a body of revolution $K\in\KN$ with $R_K>0$ and such that $S_j(K,\cdot)=\mu$, it follows from \eqref{eq:p_uK_non-trivdec} that $F_{\mu}^\pm$ must be non-trivial and non-decreasing functions on $(0,\frac{\pi}{2}]$.

Conversely, let $\mu$ be such that the associated functions $F_{\mu}^\pm$ are non-trivial and non-decreasing. By the Riesz--Markov--Kakutani representation theorem, there exist finite Borel measures $\eta^+$ and $\eta^-$ on $\Rn$ such that
\[
\int_{\Rn}\varphi(x) \d\eta^\pm(x) = \int_{\sN_{\pm}} \varphi(\gnom(z)) |z_{n+1}| \d\mu(z)
\]
for every $\varphi\in C_c(\R^n)$. Since $\mu$ is $\SO(n)$ invariant, the measures $\eta^\pm$ are rotationally invariant. Furthermore, by \eqref{eq:chi_d_tan},
\[
\eta^\pm(D_{\tan(\alpha)}^n)=\int_{C_\alpha^\pm} |z_{n+1}| \d\mu(z) = \sin(\alpha)^{n-j} F_{\mu}^{\pm}(\alpha)
\]
for every $\alpha\in (0,\frac{\pi}{2}]$. Since $\sin(\arctan(r))=r/\sqrt{1+r^2}=p_{u_B}(r)$ for $r>0$, the functions
\[
r\mapsto  \frac{\eta^\pm(D_r^n)}{p_{u_B}(r)^{n-j}} = F_{\mu}^{\pm}(\arctan(r)),\quad r>0,
\]
are non-increasing, and thus, by Theorem~\ref{thm:mixed_hessian_unbounded}, there exist unique radially symmetric convex functions $u^\pm\in\Conv(\Rn)$, solving
\[
\begin{cases}
\MA(u^\pm[j],u_{B}[n-j];\cdot)=\eta^\pm &\text{ in } \Rn, \\
u^\pm(o)=0,
\end{cases}
\]
given through $u^\pm(x)=\int_0^{|x|} \left(\frac{1}{\kappa_n} F_{\mu}^\pm(\arctan(t)) \right)^{\frac 1j} \d t$. Since the functions $F_\mu^\pm$ are bounded, the subdifferentials of $u^\pm$ are also bounded and, in particular, the sets $(\partial u^\pm)(\Rn)$ are open or closed balls of radius
\begin{equation}
\label{eq:R_mu}
R_\mu = \left(\tfrac{1}{\kappa_n} F_\mu^+(\tfrac{\pi}{2})\right)^{\frac 1j}=\left(\tfrac{1}{\kappa_n} F_\mu^-(\tfrac{\pi}{2})\right)^{\frac 1j}>0,
\end{equation}
where the second equality is a consequence of the assumption that $\mu$ is centered.

Next, we consider the convex conjugates $(u^\pm)^*$. By Lemma~\ref{le:subdiff_conjugate} and \eqref{eq:R_mu}, the domains of these functions are open or closed balls of radius $R_\mu$. We will now show that the $n$-dimensional areas of their graphs (when restricting the functions to their domains) are bounded. Since convex conjugates are always lower-semicontinuous, this will then imply that the domains $\dom((u^\pm)^*)$ are closed and that the functions $(u^\pm)^*$ are bounded on their respective domains. Since $\mu$ is a finite measure, it follows from Lemma~\ref{le:meas_ineq} that
\begin{align*}
    \infty >\mu(\sN_{\pm})&=\int_{\sN_{\pm}}|z_{n+1}|\frac{1}{|z_{n+1}|} \d \mu(z)\\
    &=\int_{\Rn}\sqrt{1+|x|^2} \d\eta^\pm(x)\\
    &=\int_{\Rn} \sqrt{1+|x|^2} \d \MA(u^\pm[j],u_B[n-j];x)\\
    &\geq \int_{\Rn\setminus D_1^n} \sqrt{1+|x|^2} \d \MA(u^\pm[j],u_B[n-j];x)\\
    &\geq \frac{1}{2^{\frac{n-j}{2}}} \int_{\Rn\setminus D_1^n} \sqrt{1+|x|^2} \d\MA_j(u^\pm;x).
\end{align*}
Since $\int_{D_1^n} \sqrt{1+|x|^2} \d\MA_j(u^\pm;x)$ is trivially finite, it now follows from Theorem~\ref{thm:ck_ma_SOn}, the radial symmetry of $u^\pm$, \eqref{eq:conjugate_MA}, and Lemma~\ref{le:conjugate_projection} that
\begin{align*}
\infty > \int_{\Rn} \sqrt{1+|x|^2}\d\MA_j(u^\pm;x) &= \frac{\kappa_n}{\kappa_j} \int_{\R^j}\sqrt{1+|x|^2}\d\MA'(u^\pm\vert_{\R^j};x)\\
&=\frac{\kappa_n}{\kappa_j} \int_{\dom(\proj_{\R^j} (u^\pm)^*)}\sqrt{1+|\nabla \proj_{\R^j}(u^\pm)^*(x)|^2}\d x.
\end{align*}
This shows that the functions $\proj_{\R^j} (u^\pm)^*$ have graphs with finite $j$-dimensional area, which by radial symmetry implies that the functions $(u^\pm)^*$ have finite $n$-dimensional area. In particular, $(u^\pm)^*$ are radially symmetric convex functions with compact domain $B_{R_{\mu}}^n$.

We have thus found two convex functions $u^\pm$ whose conjugates are, up to the addition of constants, candidates for $\lfloor K\rfloor$ and $\lceil K \rceil$. Recall that our functions were obtained from Theorem~\ref{thm:mixed_hessian_unbounded}, which gives solutions that vanish at the origin. Since area measures are invariant under translations, we will therefore fix $u^-$ and add a suitable constant to $u^+$. First, we note that $u^\pm(o)=0$ together with \eqref{eq:def_conjugate} and radial symmetry implies that also $(u^\pm)^*(o)=0$. This means that the maximums of $(u^\pm)^*$ on their domains, which are obtained at any point with norm $R_\mu$, also tell us how high (in terms of the length of the projection onto $\ospan\{e_{n+1}\}$) the parts of the boundary of $K$ with outer unit normals in $\sN_{\pm}$ are. By \eqref{eq:def_conjugate} we have
\begin{align*}
(u^\pm)^*(R_\mu e_1)&=\sup\nolimits_{r\in \R} \left(r R_\mu-u^\pm(r e_1) \right)\\
&=\sup\nolimits_{r\in [0,\infty)} \left(r \left(\tfrac{1}{\kappa_n} F_\mu^\pm\left(\tfrac{\pi}{2}\right) \right)^{\frac 1j}-\int_0^r \left(\tfrac{1}{\kappa_n} F_\mu^\pm(\arctan(t)) \right)^{\frac 1j} \d t \right)\\
&=\sup\nolimits_{\beta\in[0,\frac{\pi}{2})} \left(\tan(\beta) \left(\tfrac{1}{\kappa_n} F_\mu^\pm\left(\tfrac{\pi}{2}\right) \right)^{\frac 1j} - \int_0^\beta \frac{1}{\cos(s)^2}\left(\tfrac{1}{\kappa_n} F_\mu^\pm(s) \right)^{\frac 1j} \d s\right),
\end{align*}
where we have taken the radial symmetry of $u^\pm$ into account. Finally, we have to consider those parts whose outer normal vectors lie in $\sN_o$. According to Lemma~\ref{le:vertical_segments}, the vertical segments in the boundary of any solution $K$ need to be of length
\[
\ell_K = \frac{S_j(K,\sN_o)}{j \kappa_n R_K^{j-1}}=\frac{\mu(\sn_o)}{j\kappa_n R_\mu^{j-1}}.
\]
Considering \eqref{eq:R_mu} and \eqref{eq:retrieve_K_floor_ceil}, we therefore set
\begin{equation}
\label{eq:solution_K}
K=\{(x,t)\in\Rn\times \R : u^-(x)\leq t \leq -u^+(x)+ c_\mu\},
\end{equation}
where $c_\mu$ is as in the statement. By our construction together with Lemma~\ref{le:int_uk} and Lemma~\ref{le:vertical_segments}, we obtain $S_j(K,\cdot)=\mu$.

Now assume that $\tilde{K}\in\KN$ is such that $S_j(\tilde{K},\cdot)=\mu$. By \eqref{eq:p_uK_non-trivdec} we must have
\[
p_{u_{\tilde{K}}^\pm}=p_{u_K^\pm},
\]
which by \eqref{eq:p_u_left_derviative} and Lemma~\ref{thm:rockafellar_integegrate} shows that $u_{\tilde{K}}^\pm$ and $u_K^\pm$ must coincide up to the addition of constants. Consequently, by \eqref{eq:floor_conjugate} there exist constants $c^+,c^-\in\R$ such that
\[
\lfloor K \rfloor = \lfloor \tilde{K}\rfloor + c^-\quad \text{and}\quad \lceil K \rceil = \lceil \tilde{K}\rfloor + c^+.
\]
By \eqref{eq:retrieve_K_floor_ceil} together with Lemma~\ref{le:vertical_segments} this implies that $K$ and $\tilde{K}$ coincide up to a translation along $\ospan\{e_{n+1}\}$.

Finally, let us give an explicit description of the body $K$ as in \eqref{eq:solution_K}, which is the unique solution such that $h_K(-e_{n+1})=0$. For $z\in\sN_-$ we have
\[
h_K(z)=|z_{n+1}|\,h_K(\gnom(z),-1)=|z_{n+1}|\,u^-(\gnom(z))=|z_{n+1}|\int_0^{|\gnom(z)|}\left(\tfrac{1}{\kappa_n} F_\mu^-(\arctan(t)) \right)^{\frac 1j} \d t.
\]
Choosing $\alpha_z\in[0,\tfrac{\pi}{2})$ such that $|z_{n+1}|=\cos(\alpha_z)$, this becomes
\begin{equation}
\label{eq:h_K_cos_alpha}
h_K(z)=\cos(\alpha_z) \int_0^{\tan(\alpha_z)}\left(\tfrac{1}{\kappa_n} F_\mu^-(\arctan(t)) \right)^{\frac 1j} \d t = \cos(\alpha_z) \int_0^{\alpha_z}\frac{1}{\cos(s)^2}\left(\tfrac{1}{\kappa_n} F_\mu^-(s) \right)^{\frac 1j} \d s.
\end{equation}
On $\sN_+$ we obtain an analogous expression, but we must now account for the height of $K$, meaning we need to have $h_K(e_{n+1})=c_\mu$. This results in an additional term of the form $\langle c_\mu \, e_{n+1}, z\rangle = \cos(\alpha_z) c_\mu$. Lastly, on $\sN_o$ we must have, by construction,
\[
h_K(z)=R_\mu=\left(\tfrac{1}{\kappa_n} F_\mu^-\left(\tfrac{\pi}{2}\right) \right)^{\frac 1j},
\]
which matches with the limit $\alpha_z\to \frac{\pi}{2}$ in \eqref{eq:h_K_cos_alpha}.
\qed

\section{Further Discussion}
\label{se:discussion}
\subsection{Examples}
Let us briefly demonstrate Theorem~\ref{thm:main_CM_revol} on two examples. First, let $\mu$ be the $j$th area measure of the $(n+1)$-dimensional Euclidean unit ball. In this case, $\mu$ is proportional to the $n$-dimensional Hausdorff measure restricted to $\sN$, that is,
\[
\d\mu(z)=\d S_j(B_1^{n+1},z)=\frac{1}{(n+1)\kappa_{n+1}} \d \hm^n(z)
\]
on $\sN$. By \eqref{eq:int_moment_cap} we have
\[
\int_{C_\alpha^\pm} |z_{n+1}|\d\mu(z) = \kappa_n \sin(\alpha)^n
\]
and thus
\[
F_\mu^{\pm}(\alpha) = \kappa_n \sin(\alpha)^j
\]
for $\alpha\in(0,\tfrac{\pi}{2}]$. The formula for $h_K$ in the statement of Theorem~\ref{thm:main_CM_revol} now gives
\[
h_K(z)=\cos(\alpha_z) \int_0^{\alpha_z} \frac{\sin(s)}{\cos(s)^2} \d s = \cos(\alpha_z)\left(\frac{1}{\cos(\alpha_z)}-1 \right)= 1+\langle e_{n+1},z\rangle
\]
for $z\in\sN_-$, and an analogous expression is obtained on the other parts of the sphere. This shows that $K=B_1^{n+1}+e_{n+1}$.

\medskip

As a second example, we choose $\mu$ to be the $j$th area measure of the $n$-dimensional disk $B_1^{n+1}\cap e_{n+1}^\perp$. When $j=n$, then $\mu$ is concentrated on $\pm e_{n+1}$, each with measure $\kappa_n$. This trivially implies that
\[
F_\mu^\pm\equiv \kappa_n
\]
is constant. Similarly, when $j\in\{1,\ldots,n-1\}$, equation \eqref{eq:int_moment_cap} shows that
\[
\int_{C_\alpha^\pm}|z_{n+1}| \d\mu(z) = \kappa_n \sin(\alpha)^{n-j}
\]
for $\alpha\in(0,\tfrac{\pi}{2}]$, resulting again in $F_{\mu}^\pm\equiv \kappa_n$. For the support function of the solution $K$ we now obtain
\begin{align*}
h_K(z)&=\cos(\alpha_z) \int_0^{\alpha_z} \frac{1}{\cos(s)^2} \d s = \cos(\alpha_z) \tan(\alpha_z) = \sin(\alpha_z)\\
&= \sqrt{1-\cos(\alpha_z)^2}= \sqrt{1-|z_{n+1}|^2}= \sqrt{z_1^2+\cdots + z_n^2}
\end{align*}
for $z\in\sN$, and thus, $K=B_1^{n+1}\cap e_{n+1}^\perp$. 

\subsection{On Mixed Problems}
Let us emphasize again that Theorem~\ref{thm:main_CM_revol} is based on the solution to entire Monge--Amp\`ere-type equations from Theorem~\ref{thm:mixed_hessian_unbounded}. Since the latter is formulated for more general mixed measures, it is, in principle, also straightforward to generalize Theorem~\ref{thm:main_CM_revol} to study existence and uniqueness problems for mixed area measures, as presented in \cite[Section 6.3]{BraunerHofstaetterOrtegaMoreno_mixedCM}. More precisely, given reference bodies of revolution $K_{j+1},\ldots,K_n\in\KN$ we are looking for necessary and sufficient conditions on a finite, $\SO(n)$ invariant Borel measure $\mu$ on $\sN$ such that
\[
S(K[j],K_{j+1},\ldots,K_n,\cdot)=\mu
\]
for some body of revolution $K\in\KN$. Considering the results of Section~\ref{se:mm_sc}, to do so it is essentially enough to redefine the functions $F_\mu^\pm$ as
\begin{equation}
\label{eq:f_mu_mixed}
F_\mu^\pm(\alpha)=\frac{\int_{C_\alpha^\pm} |z_{n+1}| \d\mu(z)}{p_{u_{K_{j+1}}^\pm}(\tan(\alpha))\cdots p_{u_{K_n}^\pm}(\tan(\alpha))},\quad \alpha\in (0,\tfrac{\pi}{2}].
\end{equation}
However, analogous to the discussion in Section~\ref{se:explicit_solutions}, one now needs to carefully treat the cases when one of the functions $p_{u_{K_i}^\pm}$ with $i\in\{j+1,\ldots,n\}$ vanishes on some interval of the form $(0,r_o)$, which means that $K_i$ has a cusp in direction $\pm e_{n+1}$ and in which case one generally loses uniqueness of solutions (even up to translations; cf.\ \cite[Remark 6.14]{BraunerHofstaetterOrtegaMoreno_mixedCM}). Furthermore, a corresponding analogue to Lemma~\ref{le:vertical_segments} needs to be adapted if $F(K_i,z)>0$ for some $i\in\{j+1,\ldots,n\}$ and $z\in\sN_o$. We plan to address such mixed problems more systematically in future work.

\medskip

That being said, let us highlight the special case $K_{j+1}=\cdots=K_n=B_{1}^{n+1}\cap e_{n+1}^\perp$, which corresponds to the existence and uniqueness problem for the measures $\bar{S}_j(K,\cdot)$. In this case, $p_{u_{K_i}^\pm}\equiv 1$ for every $i\in\{j+1,\ldots,n\}$ and \eqref{eq:f_mu_mixed} becomes
\[
F_\mu^\pm(\alpha)=\int_{C_{\alpha}^\pm} |z_{n+1}|\d\mu(z).
\]
Note that this function is always non-increasing, and it is trivial if and only if the $\SO(n)$ invariant measure $\mu$ is concentrated on $\sN_o$. Since $\bar{S}_j(K,\sN_o)=S_j(K,\sN_o)$ (cf.\ the proof of Lemma~\ref{le:vertical_segments}), we therefore obtain the following result, where we remark that the same necessary and sufficient conditions on the measure $\mu$ were recently obtained in \cite[Theorem B]{BraunerHofstaetterOrtegaMoreno_mixedCM}, using an independent approach (see also Section~\ref{se:alt_proof}).

\begin{theorem}
	\label{thm:main_bar_Sj_revol}
	Let $j\in\{1,\ldots,n\}$ and let $\mu$ be a finite, centered, $\SO(n)$ invariant Borel measure on $\sN$. There exists a convex body of revolution $K\in\KN$ with $R_K>0$ such that
	\[
	\bar{S}_j(K,\cdot)=\mu,
	\]
	if and only if $\mu$ is not concentrated on $\sN_o$. The body $K$ is unique up to translations along $\ospan\{e_{n+1}\}$ and its support function, apart from the addition of $\langle \tau\, e_{n+1},z\rangle$ with some $\tau\in\R$, is given by
	\[
	h_K(z)=\begin{cases} \cos(\alpha_z) \int_0^{\alpha_z}\frac{1}{\cos(s)^2}\left(\tfrac{1}{\kappa_n} \int_{C_s^-} |\nu_{n+1}|\d\mu(\nu) \right)^{\frac 1j} \d s,\qquad &\text{if } z\in \sN_-,\\
		\left(\tfrac{1}{\kappa_n} \int_{\sN_-} |\nu_{n+1}|\d\mu(\nu) \right)^{\frac 1j},\qquad &\text{if } z\in \sN_o,\\
		\cos(\alpha_z)\left( \int_0^{\alpha_z}\frac{1}{\cos(s)^2}\left(\tfrac{1}{\kappa_n} \int_{C_s^+} |\nu_{n+1}|\d\mu(\nu) \right)^{\frac 1j} \d s+c_\mu \right),\qquad &\text{if } z\in \sN_+,
	\end{cases}
	\]
	where $\alpha_z\in [0,\tfrac{\pi}{2}]$ is such that $|z_{n+1}|=\cos(\alpha_z)$, and where
	\begin{align*}
		c_\mu &= \frac{\mu(\sN_o)}{j\kappa_n \left(\tfrac{1}{\kappa_n} \int_{\sN_-} |\nu_{n+1}|\d\mu(\nu)\right)^{\frac{j-1}{j}}} \\
		&\quad+\sup\nolimits_{\beta\in[0,\frac{\pi}{2})} \left(\tan(\beta) \left(\tfrac{1}{\kappa_n} \int_{\sN_-} |\nu_{n+1}|\d\mu(\nu) \right)^{\frac 1j} - \int_0^\beta \frac{1}{\cos(s)^2}\left(\tfrac{1}{\kappa_n} \int_{C_s^-} |\nu_{n+1}|\d\mu(\nu) \right)^{\frac 1j} \d s\right)\\
		&\quad + \sup\nolimits_{\gamma\in[0,\frac{\pi}{2})} \left(\tan(\gamma) \left(\tfrac{1}{\kappa_n} \int_{\sN_+} |\nu_{n+1}|\d\mu(\nu) \right)^{\frac 1j} - \int_0^\gamma \frac{1}{\cos(s)^2}\left(\tfrac{1}{\kappa_n} \int_{C_s^+} |\nu_{n+1}|\d\mu(\nu) \right)^{\frac 1j} \d s\right).
	\end{align*}
\end{theorem}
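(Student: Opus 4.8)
The plan is to obtain Theorem~\ref{thm:main_bar_Sj_revol} as a direct specialization of the proof of Theorem~\ref{thm:main_CM_revol}, replacing the reference ball $B_1^{n+1}$ by the disk $B_1^{n+1}\cap e_{n+1}^\perp$, i.e.\ replacing the function $u_B(x)=\sqrt{1+|x|^2}$ by the norm $|\cdot|$ throughout. The first step is to record the elementary identities $u_{B_1^{n+1}\cap e_{n+1}^\perp}^{\pm}(x)=h_{B_1^{n+1}\cap e_{n+1}^\perp}(x,\pm1)=|x|$ and, since $\partial|\cdot|(x)=\{x/|x|\}$ for $x\neq o$, $p_{|\cdot|}\equiv1$ on $(0,\infty)$; in particular $|\cdot|\in\Conv(\Rn)$ is a radially symmetric function with $0<p_{|\cdot|}<\infty$, so Theorem~\ref{thm:mixed_hessian_unbounded} applies with $u_{j+1}=\cdots=u_n=|\cdot|$. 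Combining \eqref{eq:chi_d_tan}, Lemma~\ref{le:int_uk}, and Lemma~\ref{le:ma_d_r_n} exactly as in Section~\ref{se:mm_sc}, but with the factor $p_{u_B}(\tan(\alpha))^{n-j}=\sin(\alpha)^{n-j}$ now replaced by $p_{|\cdot|}(\tan(\alpha))^{n-j}=1$, shows that for a body of revolution $K$ one has $\int_{C_\alpha^{\pm}}|z_{n+1}|\,\d\bar{S}_j(K,z)=\kappa_n\,p_{u_K^{\pm}}(\tan(\alpha))^j$. From this the necessity of the stated condition follows: this expression is visibly non-decreasing in $\alpha$ (matching $F_\mu^{\pm}(\alpha)=\int_{C_\alpha^{\pm}}|z_{n+1}|\,\d\mu$), and it is non-trivial precisely when $p_{u_K^{\pm}}\not\equiv0$; since $\lim_{r\to\infty}p_{u_K^{\pm}}(r)=R_K>0$ by Section~\ref{se:mm_sc}, this forces $\mu$ not to be concentrated on $\sN_o$. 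The uniqueness statement likewise transcribes from the proof of Theorem~\ref{thm:main_CM_revol} via \eqref{eq:p_u_left_derviative}, Theorem~\ref{thm:rockafellar_integegrate}, \eqref{eq:floor_conjugate}, and Lemma~\ref{le:vertical_segments}.

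For the converse, assume $\mu$ is not concentrated on $\sN_o$, and push $|z_{n+1}|\,\d\mu$ forward under the gnomonic projections $\gnom\colon\sN_{\pm}\to\Rn$ to obtain rotationally invariant finite Borel measures $\eta^{\pm}$ on $\Rn$ with $\eta^{\pm}(D_{\tan(\alpha)}^n)=\int_{C_\alpha^{\pm}}|z_{n+1}|\,\d\mu(z)=F_\mu^{\pm}(\alpha)$. Since $\eta^{\pm}$ is a measure, $r\mapsto\eta^{\pm}(D_r^n)/p_{|\cdot|}(r)^{n-j}=\eta^{\pm}(D_r^n)$ is automatically non-decreasing, so Theorem~\ref{thm:mixed_hessian_unbounded} produces unique radially symmetric $u^{\pm}\in\Conv(\Rn)$ with $\MA(u^{\pm}[j],|\cdot|[n-j];\cdot)=\eta^{\pm}$ and $u^{\pm}(o)=0$, explicitly $u^{\pm}(x)=\int_0^{|x|}\big(\tfrac{1}{\kappa_n}F_\mu^{\pm}(\arctan(t))\big)^{1/j}\,\d t$. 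As $\mu$ is centered, $\int_{\sN_+}|z_{n+1}|\,\d\mu=\int_{\sN_-}|z_{n+1}|\,\d\mu$, so $(\partial u^{\pm})(\Rn)$ is a ball of the common radius $R_\mu=\big(\tfrac{1}{\kappa_n}\int_{\sN_{\pm}}|z_{n+1}|\,\d\mu\big)^{1/j}$, which is positive exactly because $\mu$ is not concentrated on $\sN_o$. To see that $(u^{\pm})^*$ has compact domain $B_{R_\mu}^n$ and is bounded there, one repeats the graph-area estimate from the proof of Theorem~\ref{thm:main_CM_revol}; here Lemma~\ref{le:meas_ineq} is not even needed, since $\MA_j(u^{\pm};\cdot)=\MA(u^{\pm}[j],|\cdot|[n-j];\cdot)=\eta^{\pm}$ by \eqref{eq:def_MAk}, whence $\int_{\Rn}\sqrt{1+|x|^2}\,\d\MA_j(u^{\pm};x)=\int_{\sN_{\pm}}|z_{n+1}|\,|z_{n+1}|^{-1}\,\d\mu(z)=\mu(\sN_{\pm})<\infty$, and then Theorem~\ref{thm:ck_ma_SOn}, \eqref{eq:conjugate_MA}, and Lemma~\ref{le:conjugate_projection} bound the $n$-dimensional area of the graph of $(u^{\pm})^*$.

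Finally, one assembles the body $K$ exactly as in \eqref{eq:solution_K} from the conjugates $(u^-)^*$, $(u^+)^*$, and the constant $c_\mu$, where the vertical segment in $\bd(K)$ is forced by Lemma~\ref{le:vertical_segments} to have length $\mu(\sN_o)/(j\kappa_n R_\mu^{j-1})$; here one uses the identity $\bar{S}_j(K,\sN_o)=S_j(K,\sN_o)$ established in the proof of Lemma~\ref{le:vertical_segments}. That $\bar{S}_j(K,\cdot)=\mu$ then follows from Lemma~\ref{le:int_uk} together with Lemma~\ref{le:vertical_segments}, and the explicit formula for $h_K$ is obtained by evaluating $h_K(z)=|z_{n+1}|\,u^{\pm}(\gnom(z))$ on $\sN_{\pm}$ (plus the term $\langle c_\mu\,e_{n+1},z\rangle$ on $\sN_+$) and substituting $t=\tan(s)$ as in \eqref{eq:h_K_cos_alpha}, with the value $R_\mu$ on $\sN_o$ appearing as the limit $\alpha_z\to\frac{\pi}{2}$. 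I do not expect a genuine obstacle here: the entire argument is a transcription of the proof of Theorem~\ref{thm:main_CM_revol} with $|\cdot|$ in place of $u_B$, and the only points requiring care are the bookkeeping of the degenerate configurations ($\mu$ concentrated on $\sN_o$, or $K$ a disk in $e_{n+1}^\perp$) and the verification that the Kubota-type and conjugation identities of Section~\ref{se:mm_sc} survive the replacement, which is immediate since $p_{|\cdot|}\equiv1$.
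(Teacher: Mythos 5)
Your proposal is correct and reproduces the paper's intended argument: the paper derives this theorem in Section~5.2 precisely as the specialization $K_{j+1}=\cdots=K_n=B_1^{n+1}\cap e_{n+1}^\perp$ of the mixed framework, using $p_{u_{K_i}^\pm}\equiv 1$, the resulting form $F_\mu^\pm(\alpha)=\int_{C_\alpha^\pm}|z_{n+1}|\d\mu(z)$ (note the paper's "non-increasing" there is a typo for "non-decreasing", and you state it correctly), and the identity $\bar{S}_j(K,\sN_o)=S_j(K,\sN_o)$ from the proof of Lemma~\ref{le:vertical_segments}. Your observation that $\MA_j(u^\pm;\cdot)=\MA(u^\pm[j],|\cdot|[n-j];\cdot)=\eta^\pm$ holds exactly by definition \eqref{eq:def_MAk}, so that the finiteness of $\int_{\Rn}\sqrt{1+|x|^2}\d\MA_j(u^\pm;x)=\mu(\sN_\pm)$ is immediate and Lemma~\ref{le:meas_ineq} can be bypassed, is a genuine (if minor) streamlining of the transcription, and everything else carries over verbatim.
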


Lastly, following the spirit of the classical solution to the Christoffel problem, the existence and uniqueness question for the measure $\bar{S}_1(K,\cdot)$, without additional symmetry assumptions, was recently settled in \cite{BraunerHofstaetterOrtegaMoreno_diskC}.

\subsection{Disintegration}
\label{se:disintegration}
Let $\mu$ be a finite, $\SO(n)$ invariant Borel measure on $\sN$. Expanding on the methods of Lemma~\ref{le:rot_inv_meas_determined}, it is elementary to check that there exists a unique finite Borel measure $\tilde{\mu}$ on $\s^1 \subset \ospan\{e_1,e_{n+1}\}$ that is invariant under $(x_1,x_{n+1})\mapsto (-x_1,x_{n+1})$ and such that
\[
\int_{\sN} \varphi(z) \d\mu(z) = \int_{\SO(n)} \int_{\s^1} \varphi(\vartheta y) \d\tilde{\mu}(y) \d\vartheta
\]
for every measurable $\varphi\colon \sN\to[0,\infty)$. Given the properties of $\tilde{\mu}$ and the structure of $\s^1$, this is equivalent to considering a measure $\bar{\mu}$ on $[-\tfrac{\pi}{2},\tfrac{\pi}{2}]$ such that $\mu(C_\alpha^-)=\bar{\mu}([-\tfrac{\pi}{2},\alpha-\tfrac{\pi}{2}))$ for $\alpha\in(0,\tfrac{\pi}{2}]$ and a similarly relation holds for $C_\alpha^+$ (cf.\ \cite[Proposition 6.1]{BraunerHofstaetterOrtegaMoreno_mixedCM}). The functions $F_\mu^\pm$ in Theorem~\ref{thm:main_CM_revol} can now be rewritten as
\[
F_\mu^-(\alpha)=\frac{-\int_{[-\pi/2,\alpha-\pi/2)} \sin(s) \d\bar{\mu}(s) }{\sin(\alpha)^{n-j}} \quad \text{and} \quad F_\mu^+(\alpha)=\frac{\int_{(\pi/2-\alpha,\pi/2]} \sin(s) \d\bar{\mu}(s) }{\sin(\alpha)^{n-j}},\quad \alpha\in (0,\tfrac{\pi}{2}].
\]

\subsection{An Alternative Proof Strategy}
\label{se:alt_proof}
As mentioned in the introduction, a complete solution to the Christoffel--Minkowski problem for bodies of revolution was recently obtained by Brauner, Hofst\"atter, and Ortega-Moreno in \cite{BraunerHofstaetterOrtegaMoreno_mixedCM}. The conditions on $\mu$ established in Theorem~\ref{thm:main_CM_revol} are essentially an integrated form of those in \cite[Corollary A]{BraunerHofstaetterOrtegaMoreno_mixedCM}. Let us briefly comment on how the overall strategy presented in \cite{BraunerHofstaetterOrtegaMoreno_mixedCM} can be adopted to recover the conditions on $\mu$ as in Theorem~\ref{thm:main_CM_revol}, while entirely bypassing the mixed Monge--Amp\`ere measures that play a central role in our proof presented in Section~\ref{se:CM_problems}.

\medskip

First, we proceed as in \cite[Section 6.1]{BraunerHofstaetterOrtegaMoreno_mixedCM} to solve the existence and uniqueness problem for the measure $\bar{S}_j(K,\cdot)$, using disintegration (cf.\ Section~\ref{se:disintegration}) together with the Kubota-type formula Theorem~\ref{thm:bar_sk_rewrite} and the solution to the classical Minkowski problem, thereby establishing a non-explicit version of Theorem~\ref{thm:main_bar_Sj_revol}. Next, we use \cite[Corollary 3.11]{BraunerHofstaetterOrtegaMoreno_ZonalVal} to obtain the relevant connection between the measures $S_j(K,\cdot)$ and $\bar{S}_j(K,\cdot)$, namely
\begin{equation}
\label{eq:relation_measures}
\frac{\int_{C_\alpha^{\pm}} |z_{n+1}|\d S_j(K,z)}{\sin(\alpha)^{n-j}}=\int_{C_\alpha^{\pm}} |z_{n+1}|\d \bar{S}_j(K,z)
\end{equation}
for $\alpha\in (0,\tfrac{\pi}{2}]$, which is also contained in \cite[Proposition 5.8]{BraunerHofstaetterOrtegaMoreno_mixedCM}. See also \cite[Section 5]{HugMussnigUlivelli2} for an analytic analogue as well as Lemma~\ref{le:ma_k_phi_k}.

The key observation is now that every $\SO(n)$ invariant Borel measure on $\sN_{\pm}$ is uniquely determined by two non-increasing functions on $(0,\tfrac{\pi}{2}]$, which follows from using Lebesgue--Stieltjes measures and disintegration again. For the measure $|z_{n+1}|\d \bar{S}_j(K,z)$, these functions are
\[
\alpha\mapsto \int_{C_\alpha^{\pm}} |z_{n+1}|\d \bar{S}_j(K,z),\quad \alpha\in (0,\tfrac{\pi}{2}].
\]
Together with \eqref{eq:relation_measures}, the equality $S_j(K,\cdot)\equiv \bar{S}_j(K,\cdot)$ on $\sN_o$, and a finiteness estimate (similar to Lemma~\ref{le:meas_ineq}), this gives precisely the conditions on $\mu$ presented in Theorem~\ref{thm:main_CM_revol}.

\subsection{Further Observations}
Similar to Firey's solution of the Christoffel--Minkowski problem for sufficiently regular bodies of revolution \cite{Firey_revolution}, Theorem~\ref{thm:main_CM_revol} easily shows that if $\mu$ is the $j$th area measure of some body of revolution $K\in\KN$ with $R_K>0$, then $\mu$ is also the $k$th area measure of some $L\in\KN$ with $R_L>0$ and $k\in\{j+1,\ldots,n\}$.

In addition, since the functions $F_\mu^\pm$ in Theorem~\ref{thm:main_CM_revol} depend linearly on $\mu$, it follows that if $K_1,K_2\in\KN$ are bodies of revolution with $R_{K_1},R_{K_2}>0$, then there exists a body of revolution $L\in\KN$ with $R_L>0$ such that
\[
S_j(L,\cdot)=S_j(K_1,\cdot)+S_j(K_2,\cdot).
\]
For $j=1$, the body $L$ is (up to translations along $\ospan\{e_{n+1}\}$) just the Minkowski sum of $K_1$ and $K_2$, and for $j=n$, we obtain $L$ as the Blaschke sum. In general, however, the sum of two $j$th area measures is not the $j$th area measure of a convex body \cite[Theorem 8.4.1]{SchneiderConvexBodiesBrunn2013}.

\subsection*{Acknowledgments}
Fabian Mussnig was supported by the Austrian Science Fund (FWF):\ 10.55776/P36210. Jacopo Ulivelli was supported by the Austrian Science Fund (FWF):\ 10.55776/P34446.

\footnotesize

\vfill

\parbox[t]{8.5cm}{
Fabian Mussnig\\
Institut f\"ur Diskrete Mathematik und Geometrie\\
TU Wien\\
Wiedner Hauptstra{\ss}e 8-10/1046\\
1040 Wien, Austria\\
e-mail: fabian.mussnig@tuwien.ac.at

\smallskip
\textit{Current address:} Mathematics Department\\
University of Salzburg\\
Hellbrunner Stra{\ss}e 34\\
5020 Salzburg, Austria\\
e-mail: fabian.mussnig@plus.ac.at
}

\bigskip

\parbox[t]{8.5cm}{
Jacopo Ulivelli\\
Institut f\"ur Diskrete Mathematik und Geometrie\\
TU Wien\\
Wiedner Hauptstra{\ss}e 8-10/1046\\
1040 Wien, Austria\\
e-mail: jacopo.ulivelli@tuwien.ac.at}

\end{document}